\numberwithin{equation}{section}
\theoremstyle{definition}
 \newtheorem{theorem}{Theorem}[section]
 \crefname{theorem}{Theorem}{Theorems}
 \newtheorem{proposition}[theorem]{Proposition}
 \crefname{proposition}{Proposition}{Propositions}
 \newtheorem{lemma}[theorem]{Lemma}
 \crefname{lemma}{Lemma}{Lemmas}
 \crefname{corollary}{Corollary}{Corollaries}
 \crefname{conjecture}{Conjecture}{Conjectures}
 \crefname{question}{Question}{Questions}
 \crefname{problem}{Problem}{Problems}
 \newtheorem{remark}[theorem]{Remark}
 \crefname{remark}{Remark}{Remarks}
\theoremstyle{definition} 
 \crefname{definition}{Definition}{Definitions}
 \crefname{example}{Example}{Examples}
 \crefname{caution}{Caution}{Cautions}
 \crefname{equation}{formula}{formulas}
\newcommand{\abs}[1]{\left\lvert#1\right\rvert}
\newcommand{\setmid}{\mathrel{}\middle|\mathrel{}} 
\newcommand{\F}{\mathbb{F}}
\newcommand{\Q}{\mathbb{Q}}
\newcommand{\R}{\mathbb{R}}
\newcommand{\Z}{\mathbb{Z}}
\DeclareMathOperator{\Disc}{Disc} %discriminant
\DeclareMathOperator{\Ker}{Ker} %kernel
\DeclareMathOperator{\ord}{ord} %order
\DeclareMathOperator{\rank}{rank} %rank
\newcommand{\Sel}[2]{\mathrm{Sel}_{#1}(#2)} %添字, 引数込みのSelmer群
\DeclareSymbolFont{cyrletters}{OT2}{wncyr}{m}{n}
\DeclareMathSymbol{\Sha}{\mathalpha}{cyrletters}{"58}
\title{On the proportions of soluble forms in some families of locally soluble binary quartic forms}
\author{Yasuhiro Ishitsuka}
\address{Institute of Mathematics for Industry, Kyushu University, Fukuoka, 819-0395, Japan}
\email{yishi1093@gmail.com}
\author{Yoshinori Kanamura}
\address{Department of Mathematics \\ Faculty of Science and Technology, Keio University, 3-14-1, Hiyoshi, Kohoku, Yokohama, Kanagawa, Japan}
\email{kana1118yoshi@keio.jp}
\subjclass[2020]{primary 	14G05;  %Rational points
secondary
		11G05; %Elliptic curves over global fields
		14H25; %Arithmetic ground fields for curves
        14G12; %Hasse principle, weak and strong approximation, Brauer-Manin obstruction
        11N45; %Asymptotic results on counting functions for algebraic and topological structures
		}
\keywords{Rational points on genus one curves, 
    Binary quartic forms,
    Elliptic curves,
    Selmer group,
    Arithmetic statistics}
\begin{document}

\maketitle

\begin{abstract}
    An integral binary quartic form is said to be locally soluble (resp.\ soluble) if the corresponding genus one curve has a rational point over $\mathbb{Q}_v$ for every place $v$ of $\mathbb{Q}$ (resp.\ over $\mathbb{Q}$). We consider the proportion of soluble integral binary quartic forms in locally soluble forms.
    Bhargava showed the proportion is positive when one considers all binary quartics, and Bhargava--Ho proved the proportion is zero for a subfamily.
    
    In this paper, we estimate the proportions for some other subfamilies. 
    It relies on results for elliptic curves $y^2=x^3-n^2x$ by Heath-Brown, Xiong--Zaharescu and Smith.
\end{abstract}

%----------------------
\section{Introduction}
%----------------------
Let $f(x,y)$ be a binary quartic form over $\Q$.
We call the curve $C_f\colon z^2=f(x,y)$ \emph{locally soluble} if $C_f(\Q_v)\neq\emptyset$ for all places $v$ of $\Q$.
We also call the curve $C_f$ \emph{soluble} if $C_f(\Q)\neq\emptyset$.
In what follows, 
we also call $f(x,y)$ locally soluble (resp.\ soluble) when $C_f$ is locally soluble (resp.\ soluble).
For an integral binary quartic form $f$, 
the naive height means the maximum of absolute values of the coefficients of $f$.
In 2014, 
Bhargava showed the following interesting result.

\begin{theorem}[cf.\ {\cite[Theorem 2]{Bhargava2014}}]\label{Bhargava}
When locally soluble integral binary quartics $f(x, y)$ are ordered by the naive height, the proportion of soluble forms is positive.
\end{theorem}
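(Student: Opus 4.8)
The plan is to pass to the $2$-descent picture for elliptic curves and then count. Recall that a nondegenerate binary quartic $f$ carries two polynomial invariants $I(f),J(f)$ for the action of $\GL{2}{\Q}$, and that the genus one curve $C_f\colon z^2=f(x,y)$ is a torsor under the elliptic curve $E_f\colon y^2=x^3-27I(f)x-27J(f)$, presented as a $2$-covering $C_f\to E_f$. Under this dictionary, $f$ is locally soluble exactly when the class $[C_f]$ lies in the $2$-Selmer group $\Sel{2}{E_f}$, and $f$ is soluble exactly when $[C_f]$ lies in the image of $E_f(\Q)/2E_f(\Q)$; in particular the trivial class is always soluble. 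Since every soluble form is locally soluble, it suffices to prove a lower bound $\#\{f : H(f)\le X,\ f\text{ soluble}\}\gg X^5$ together with the trivial upper bound $\#\{f : H(f)\le X,\ f\text{ locally soluble}\}\ll X^5$, the latter holding because locally soluble forms lie inside the box of all integral forms of height $\le X$, of which there are $\asymp X^5$.

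First I would record that locally soluble forms have positive density among all integral binary quartics. This is a local computation: solubility over each $\Q_v$ is an open condition cut out by congruences and sign constraints, the local densities $\mu_v$ are all positive, and the product $\prod_v\mu_v$ converges to a positive number. A sieve promotes this to the global asymptotic $\#\{f : H(f)\le X,\ f\text{ locally soluble}\}\sim\big(\prod_v\mu_v\big)X^5$, so the denominator of the proportion is $\Theta(X^5)$.

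The heart of the matter is the lower bound on soluble forms. I would organize integral forms by their invariants $(I,J)$, using that a form of height $\le X$ with nonzero discriminant has $|I|\ll X^2$ and $|J|\ll X^3$, and that for fixed $(I,J)$ the locally soluble forms decompose, according to the class $[C_f]\in\Sel{2}{E_{I,J}}$, into finitely many $\GL{2}{\Z}$-orbit families, the trivial one being soluble. Counting integral forms of bounded naive height inside these families is a geometry-of-numbers problem, solved by integrating the appropriate characteristic function over a fundamental domain for the $\GL{2}{\Z}$-action while controlling the cusp. The decisive global input is that the $2$-Selmer group has bounded average size over this family, its average being $3$ (Bhargava--Shankar). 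Because the mean number of Selmer classes is finite while the trivial class is always soluble, the soluble (trivial-class) forms cannot be diluted below a fixed positive fraction of all locally soluble forms, which gives $\#\{f : H(f)\le X,\ f\text{ soluble}\}\gg X^5$ and hence a positive proportion.

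The step I expect to be the main obstacle is the last one: making precise how the \emph{naive-height} count of individual integral forms, as opposed to $\GL{2}{\Z}$-orbits, is distributed among Selmer classes. One must bound, uniformly in the orbit, the number of integral forms of height $\le X$ in a single class, i.e.\ handle forms whose reduced representative is deep in the cusp, with enough uniformity that the finite average of $\#\Sel{2}{E}$ can be applied class by class. Securing this uniformity, so that boundedness of the average Selmer size converts into a genuine positive lower density for the trivial class, is the crux; by comparison the local density computation and the descent dictionary are standard.
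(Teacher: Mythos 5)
First, a point of reference: the paper does not prove this statement at all --- it is imported verbatim from the cited source, so there is no internal proof to compare against. Your sketch is, in outline, the strategy of that source (and of the related argument of Browning that the present paper adapts in its Section 6): pass from integral binary quartics to $2$-coverings of the Jacobian, identify locally soluble forms with $2$-Selmer classes and soluble forms with classes in the image of $E(\Q)/2E(\Q)$, observe that the trivial class is always soluble, and invoke the Bhargava--Shankar theorem that the average size of $\Sel{2}{E}$ is $3$ so that trivial classes cannot be outnumbered by more than a bounded factor. The trivial upper bound $\ll X^5$ for locally soluble forms and the positivity of the local densities are likewise standard (the exact density is the Bhargava--Cremona--Fisher result cited in this paper). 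Nothing you assert is false.

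The gap is the one you name yourself, and it is the substance of the proof rather than a technicality. The input ``average of $\#\Sel{2}{E}$ equals $3$'' is a statement about $\GL{2}{\Z}$-orbits of forms ordered by the invariant height $\max\{|I|^3, J^2\}$, whereas the theorem concerns individual integral forms ordered by the naive height. To convert, you need (i) a lower bound $\gg X^5$ for the number of integral forms of naive height at most $X$ lying in trivial Selmer classes, and (ii) an upper bound of order $X^5$ for all locally soluble forms, with comparable constants. Point (ii) is immediate, but (i) requires controlling, uniformly over classes and including representatives deep in the cusp of the fundamental domain, how many integral forms of bounded naive height a single $\GL{2}{\Q}$-equivalence class contributes; only then can the orbit-count average be applied class by class. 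Your phrase ``cannot be diluted below a fixed positive fraction'' asserts precisely the conclusion of this step without supplying it: as written, the argument shows that a positive proportion of Selmer \emph{classes} are soluble, not yet that a positive proportion of \emph{forms of bounded naive height} are. Closing that gap --- the equidistribution/uniformity statement relating the naive-height count of forms to the orbit count --- is the actual content of the proof in the cited reference.
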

In addition to this theorem,
Bhargava conjectured the exact value of the proportion; for details, see \cite[Section 1.3]{Bhargava2014}.
Note that the proportion of locally soluble forms in integral binary quartic forms is already determined by Bhargava--Cremona--Fisher \cite[Theorem 3]{BCF2021}.
Hence, Bhargava also conjectured the exact value of the proportion of soluble forms in integral binary quartics ordered by the naive height.

Recently, 
Bhargava--Ho \cite{Bhargava-Ho2022} showed an analogue theorem on a subfamily of binary quartics.
Before we state their result,
we introduce the \emph{Bhargava--Ho height}.
For an integral binary quartic $f = ax^4 + Bx^2y^2 + cy^4$, we define the Bhargava--Ho height $H_{\text{BH}}$ as
\begin{align*}
H_{\text{BH}}(f) = \max\{B^2, \abs{ac}\}.
\end{align*}
Then, Bhargava--Ho's result is the following.
\begin{theorem}[{\cite[Theorem 1.6]{Bhargava-Ho2022}}]
When locally soluble integral binary quartic forms $ax^4 + Bx^2y^2 + cy^4$ are ordered by the Bhargava--Ho height,
the proportion of soluble forms is $0\%$.
\end{theorem}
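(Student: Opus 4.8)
The plan is to pass from the forms to their Jacobians and to exploit a mismatch between the number of forms and the number of elliptic curves they produce. For $f = ax^4 + Bx^2y^2 + cy^4$ the curve $C_f$ is a genus-one curve whose Jacobian is $E_f\colon Y^2 = X^3 - 27IX - 27J$ with $I = 12ac + B^2$ and $J = 2B(36ac-B^2)$; because $f$ is even, $E_f$ carries a rational $2$-torsion point and $C_f$ is a $2$-covering of $E_f$. Under the standard dictionary (as in \cite{BCF2021}) between locally soluble binary quartics with fixed invariants and the $2$-Selmer group, $C_f$ is locally soluble exactly when $f$ defines a class in $\Sel{2}{E_f}$, and, via the sequence $0\to E_f(\Q)/2E_f(\Q)\to \Sel{2}{E_f}\to \Sha(E_f)[2]\to 0$, $C_f$ is soluble exactly when that class lies in the image of $E_f(\Q)/2E_f(\Q)$. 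The crucial bookkeeping fact I would record first is that $(I,J)$ depends on $(a,B,c)$ only through $B$ and the product $m=ac$ (indeed $J = 6IB - 8B^3$), so all forms sharing a given Jacobian have the same Bhargava--Ho height $\max\{B^2,|m|\}$, and there are $\asymp d(|m|)$ of them, one for each factorization $ac=m$, where $d$ is the divisor function.

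This splits the problem into a count of curves against a count of forms. The number of pairs $(B,m)$ with $\max\{B^2,|m|\}\le X$, hence the number of Jacobians in play, is $\asymp X^{3/2}$; summing the multiplicities $d(|m|)$ over the same range gives $\asymp X^{3/2}\log X$ forms, the extra $\log X$ coming entirely from the divisor function. For the denominator I would show that a positive proportion of these forms are locally soluble, so that the number of locally soluble forms of height at most $X$ is $\gg X^{3/2}\log X$. This is a local-density computation in the spirit of \cite{BCF2021}: the local solubility conditions at each place are positive-density congruence conditions on $(a,c)$, so on average a positive proportion of the $d(|m|)$ factorizations survive, preserving the logarithm.

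The heart of the argument is the complementary upper bound: the number of soluble forms of height at most $X$ is $O(X^{3/2})$, \emph{without} the logarithm. I would separate the forms soluble through an obvious section --- those with $a$ or $c$ a perfect square, carrying the rational point $(1,0,\sqrt a)$ or $(0,1,\sqrt c)$ --- which a direct count shows number $O(X^{3/2})$, from the remaining soluble forms, whose rational point trivialises the torsor, so that the corresponding Selmer class lies in the image of $E_f(\Q)/2E_f(\Q)$. Grouping the latter by their Jacobian and using that the fibres of the form-to-class map have bounded size, their number is $\ll \sum_{E}\#\bigl(E_f(\Q)/2E_f(\Q)\bigr)\le \sum_E \#\Sel{2}{E_f}$, the sum running over the $\asymp X^{3/2}$ curves. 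The bound $O(X^{3/2})$ then follows from the boundedness of the average size of the relevant $2$-descent group over this family of elliptic curves with a rational $2$-torsion point --- which on the slices $B=0$, where $E_f\colon Y^2=X^3-324\,ac\,X$ is a quadratic twist of $y^2=x^3-x$, is exactly Heath-Brown's Selmer-average computation for the congruent-number curves.

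Combining the two estimates, the proportion of soluble forms among locally soluble ones is $\ll X^{3/2}/(X^{3/2}\log X)=1/\log X\to 0$, which is the assertion. The main obstacle is the numerator bound, and specifically two uniformity issues hidden in it: first, that distinct integral forms in our family lying over the same Jacobian and representing the same torsor class occur with bounded multiplicity, so that solubility is genuinely controlled at the level of $E_f(\Q)/2E_f(\Q)$; and second, that the average of $\#\Sel{2}{E_f}$ over the family stays bounded when ordered by the Bhargava--Ho height across all $B$, not merely on the slice $B=0$ handled by the cited congruent-number results. It is precisely the boundedness of these descent averages --- refined by Xiong--Zaharescu and Smith when one wants the exact limiting proportions rather than merely $0$ --- that prevents the rare positive-rank curves from restoring the missing logarithm.
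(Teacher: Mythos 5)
First, context: the paper does not prove this statement --- it is quoted from Bhargava--Ho \cite[Theorem 1.6]{Bhargava-Ho2022} --- so there is no internal proof to compare against. Your architecture (lower-bound the locally soluble count by an unbounded average, upper-bound the soluble count via a bounded average of $2$-descent groups, and let the mismatch force the proportion to $0$) is indeed the strategy of Bhargava--Ho, and it is also exactly the strategy this paper uses for its own \cref{phihat1} on the slice $B=0$, $ac=-n^2$. But as a proof your proposal has two genuine gaps, and they are the entire content of the theorem rather than uniformity details.

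The first gap is the denominator. You assert that ``on average a positive proportion of the $d(|m|)$ factorizations survive'' the local solubility conditions, ``preserving the logarithm,'' hence $\gg X^{3/2}\log X$ locally soluble forms. This is not a routine positive-density statement and cannot be taken for granted: the local conditions at the primes dividing $m$ are roughly independent events of probability bounded away from $1$, so the chance that a \emph{given} factorization $ac=m$ survives decays as $\omega(m)$ grows, and whether the \emph{expected number} of survivors still tends to infinity is precisely the delicate character-sum (Selmer-average) computation that constitutes the theorem. That it can go either way is visible inside this very paper: for $\mathcal{F}^{0}_{4n^2}$ (the $\varphi_1$-direction) the average number of locally soluble classes is $1+o(1)$ by \cref{Selmergrp}, i.e.\ essentially no nontrivial factorization survives, while for $\mathcal{F}^{0}_{-n^2}$ (the $\widehat{\varphi_1}$-direction) the average grows only like $\log\log X$ by \cref{localconditiongeneral}, not like $\log X$. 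Without a proof that the relevant average is unbounded over the full family, your ratio could be $\asymp 1$ and the conclusion fails. The second gap is the numerator: the bound $\sum_{E}\#\Sel{2}{E_f}=O(X^{3/2})$ requires the boundedness of the average $2$-Selmer size over the family of Jacobians with a marked rational $2$-torsion point, ordered by $\max\{B^2,|ac|\}$ with $B$ varying; Heath-Brown's congruent-number result covers only the slice $B=0$, and the general case is itself one of the main theorems of \cite{Bhargava-Ho2022}, so invoking it comes close to assuming what is to be proved. You flag both issues yourself, which is to your credit, but filling them is the theorem.
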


Comparing Bhargava's and Bhargava--Ho's results,
we find that the proportions are totally different in the two cases.
In this paper,
we examine the proportion of soluble forms in some other subfamilies.
Especially, we find some subfamilies whose proportions of soluble forms are totally different.

For an integer $B$ and $M$, 
we write $W^B_M(\Z)$ for the set of binary quartic forms $f(x,y)=ax^4+Bx^2y^2+cy^4$ with $a,c \in \Z$ and $ac=M$. 
We also define
\begin{align*}
\mathcal{F}^{B}_{M} = \{ax^4+Bx^2y^2+cy^4\in W^B_{M}(\Z) \mid \text{$0\leq \ord_p a\leq 1$ for all places $v$ of $\Q$}\}.
\end{align*}
In what follows,
we abbreviate squarefree as ``sqf.''.

\begin{theorem}[{see \cref{main1refined}}]\label{main1}
When locally soluble forms $f\in \bigcup_{\substack{n\geq 1\\ \text{$n$: sqf.}}}\mathcal{F}^{0}_{4n^2}$ are ordered by the Bhargava--Ho height,
the proportion of soluble forms is $100\%$.
\end{theorem}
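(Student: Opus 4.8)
The plan is to translate solubility of the forms in $\bigcup_n\mathcal{F}^0_{4n^2}$ into the arithmetic of the congruent number curve $E_n\colon y^2=x^3-n^2x$ via $2$-isogeny descent, and then feed in the known statistics of $E_n$. A binary quartic $f=ax^4+cy^4$ has invariants $I=12ac$ and $J=0$, so when $ac=4n^2$ its associated Jacobian $Y^2=X^3-27IX$ is isomorphic to $E_n$; thus every $C_f$ with $f\in W^0_{4n^2}(\Z)$ is a genus one curve with Jacobian $E_n$. First I would upgrade this to an explicit dictionary. Let $\phi\colon E_n\to E_n'$ be the $2$-isogeny with kernel $\{O,(0,0)\}$, where $E_n'\colon y^2=x^3+4n^2x$. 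The map $X=ax^2$ exhibits $C_f\colon z^2=ax^4+cy^4$ (with $a$ squarefree and $ac=4n^2$) as the $\phi$-covering of $E_n$ attached to the class $a\in\Q^\times/(\Q^\times)^2$, and the real place forces $a,c>0$ on any locally soluble $f$. Under this correspondence the everywhere locally soluble forms are precisely the elements of the isogeny Selmer group $\mathrm{Sel}^{(\phi)}(E_n)$, the soluble forms are the classes coming from $E_n'(\Q)$, and the descent exact sequence
\begin{align*}
0\to \frac{E_n'(\Q)}{\phi(E_n(\Q))}\to \mathrm{Sel}^{(\phi)}(E_n)\to \Sha(E_n)[\phi]\to 0
\end{align*}
identifies the obstruction to solubility with the $\phi$-part $\Sha(E_n)[\phi]$.

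Since $H_{\text{BH}}(f)=\abs{ac}=4n^2$, ordering by height is ordering by $n$, and all forms with a fixed $n$ share one height. The proportion of soluble forms therefore equals
\begin{align*}
\lim_{X\to\infty}\frac{\sum_{n\leq X}\#\bigl(E_n'(\Q)/\phi(E_n(\Q))\bigr)}{\sum_{n\leq X}\#\mathrm{Sel}^{(\phi)}(E_n)},
\end{align*}
where $n$ runs over squarefree integers. As the summand in the numerator equals $\#\mathrm{Sel}^{(\phi)}(E_n)/\#\Sha(E_n)[\phi]$, the theorem reduces to two statements: that $\Sha(E_n)[\phi]$ is trivial for a density one set of squarefree $n$, and that the remaining $n$ contribute a vanishing fraction of the total Selmer mass.

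For the first statement I would invoke the arithmetic statistics of the congruent number family. The essential phenomenon is an asymmetry between the two isogenies: the $\phi$-coverings $z^2=ax^4+cy^4$ have positive coefficients $a,c>0$, which both guarantees real solubility and imposes local conditions keeping $\mathrm{Sel}^{(\phi)}(E_n)$ small, so that the nontrivial part of $\Sha(E_n)[2]$ is carried instead by the indefinite $\hat\phi$-coverings (those with $ac<0$). From Heath-Brown's determination of the distribution of the Selmer groups of $E_n$, its refinement by Xiong--Zaharescu, and Smith's sharp density one results toward the Goldfeld distribution, I would extract that the first descent through $\phi$ is sharp for a density one set of squarefree $n$, i.e.\ that $\Sha(E_n)[\phi]=0$ off a set of density zero.

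The hard part will be the second, quantitative, statement. A density one vanishing of $\Sha(E_n)[\phi]$ does not immediately control the weighted ratio above, because an exceptional $n$ may carry a $\phi$-Selmer group as large as $2^{\omega(n)}$. I would therefore need an average-order bound showing that the restriction of $\sum_{n\leq X}\#\mathrm{Sel}^{(\phi)}(E_n)$ to the exceptional set is $o\bigl(\sum_{n\leq X}\#\mathrm{Sel}^{(\phi)}(E_n)\bigr)$; such first and second moment estimates for the Selmer groups of $E_n$ are exactly what the cited analytic work provides. Combining the moment bound with the density one vanishing of $\Sha(E_n)[\phi]$ forces the ratio to $1$, giving the proportion $100\%$.
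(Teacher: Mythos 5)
Your overall strategy coincides with the paper's: identify the forms $ax^4+cy^4$ with $ac=4n^2$ with classes in $H^1(\Q,E_n[\varphi_1])$, the locally soluble ones with $\Sel{\varphi_1}{E_n}$, the soluble ones with $E_{1,n}(\Q)/\varphi_1(E_n(\Q))$, and then feed in Selmer statistics for the family $E_n$. Where you diverge is in the final step. You split the problem into (a) density-one vanishing of $\Sha(E_n)[\varphi_1]$ and (b) a moment bound controlling the Selmer mass of the exceptional set, and you flag (b) as the hard part, proposing to use second moments and Smith's work. The paper does something simpler that subsumes both: Xiong--Zaharescu's first moment (the paper's \cref{Selmergrp}, extended there to the residue classes $n\equiv\pm2\bmod 8$) says $\sum_{n\le X}\#\Sel{\varphi_1}{E_n}=\#\{n\le X:\text{sqf.}\}+o(X)$, i.e.\ the \emph{total} contribution of nontrivial $\varphi_1$-Selmer elements is $o(X)$. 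Since the trivial class is always soluble, the soluble count and the locally soluble count are both squeezed between $\#\{n\le X:\text{sqf.}\}$ and $\#\{n\le X:\text{sqf.}\}+o(X)$, and the ratio is $1$ with no reference to $\Sha$, to second moments, or to Smith's theorem (which the paper needs only for \cref{main3}). Your "hard part" is therefore already contained in the first moment: for exceptional $n$ one has $\#\Sel{\varphi_1}{E_n}\ge 2$, so the exceptional Selmer mass is at most $2\sum_n(\#\Sel{\varphi_1}{E_n}-1)=o(X)$. Two smaller points you should still supply to make the argument complete: the identification of the \emph{integral} forms in $\mathcal{F}^{0}_{4n^2}$ (your normalization $0\le\ord_p a\le 1$) with full equivalence classes of locally soluble rational forms requires an integrality lemma (the paper's \cref{integralSummarized}); and the Xiong--Zaharescu input as stated in the literature covers only odd $n$, so the even residue classes need the separate treatment given in the paper's proof of \cref{Selmergrp}.
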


By slightly changing the condition on the coefficients,
we obtain the exact opposite result.

\begin{theorem}[{see \cref{phihat1}}]\label{main2}
When locally soluble forms $f \in \bigcup_{\substack{n\geq 1\\ \text{$n$: sqf.}}}\mathcal{F}^{0}_{-n^2}$ are ordered by the Bhargava--Ho height, the proportion of the soluble forms is $0\%$.
\end{theorem}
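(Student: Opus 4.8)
The plan is to reinterpret each form $f = ax^4 + cy^4 \in \mathcal{F}^0_{-n^2}$ as a principal homogeneous space arising in the descent by the $2$-isogeny on the congruent number curve $E_n \colon y^2 = x^3 - n^2 x$, and to translate local and global solubility into Selmer and Mordell--Weil statements. Concretely, write $c = -n^2/a$ with $a$ squarefree, so that $a$ runs over $\pm$ the squarefree divisors of $n$. A rational point $(x_0:y_0:z_0)$ on $C_f$ with $y_0\neq 0$ produces, after setting $X=x_0/y_0$ and $Z=z_0/y_0^2$, a rational point on $E_n$:
\[
(x,y)=\bigl(aX^2,\;aXZ\bigr),\qquad y^2=x(x-n)(x+n),
\]
because $(aX^2-n)(aX^2+n)=a^2X^4-n^2=a(aX^4+c)=aZ^2$, whence $x(x^2-n^2)=(aXZ)^2$. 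The $x$-coordinate of this point lies in the square class of $a$. Rewriting $C_f$ as $z^2 = au^4 - (n^2/a)v^4$ exhibits it as the homogeneous space $H_a$ attached to the class $a\in \Q^\times/(\Q^\times)^2$ in the descent by the isogeny $\hat\phi$ with kernel $\{O,(0,0)\}$, where $E_n'\colon y^2=x^3+4n^2x$. Thus local solubility of $f$ is equivalent to $a\in\mathrm{Sel}^{\hat\phi}(E_n/\Q)$, and solubility of $f$ is equivalent to $a$ lying in the image of $E_n(\Q)/\hat\phi\bigl(E_n'(\Q)\bigr)$ in $\Q^\times/(\Q^\times)^2$.

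Under this dictionary the forms in $\mathcal{F}^0_{-n^2}$ are in bijection with the square classes in $\langle -1,\ p\mid n\rangle$, and the Selmer group sits inside exactly this group; hence for each squarefree $n$ the number of locally soluble forms equals $\lvert\mathrm{Sel}^{\hat\phi}(E_n/\Q)\rvert$ and the number of soluble forms equals $\lvert E_n(\Q)/\hat\phi(E_n'(\Q))\rvert$. The descent exact sequence
\[
0\to E_n(\Q)/\hat\phi\bigl(E_n'(\Q)\bigr)\to \mathrm{Sel}^{\hat\phi}(E_n/\Q)\to \Sha(E_n/\Q)[\hat\phi]\to 0
\]
shows that, for fixed $n$, the proportion of soluble forms among locally soluble ones is exactly $1/\lvert\Sha(E_n/\Q)[\hat\phi]\rvert$. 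Since the Bhargava--Ho height of every form in $\mathcal{F}^0_{-n^2}$ equals $\lvert ac\rvert=n^2$, ordering by height amounts to summing over squarefree $n\le\sqrt N$, and the quantity to estimate is
\[
\frac{\#\{\text{soluble }f:\,H_{\text{BH}}(f)\le N\}}{\#\{\text{loc.\ sol.\ }f:\,H_{\text{BH}}(f)\le N\}}=\frac{\sum_{n\le\sqrt N}\lvert E_n(\Q)/\hat\phi(E_n'(\Q))\rvert}{\sum_{n\le\sqrt N}\lvert\mathrm{Sel}^{\hat\phi}(E_n/\Q)\rvert}.
\]

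It remains to show this ratio tends to $0$, which is where the arithmetic of $E_n$ enters. For the numerator, the soluble count is small for almost all $n$: by Smith's theorem on the distribution of $2^\infty$-Selmer groups of the congruent number curves, the Mordell--Weil rank of $E_n$ is $\le 1$ along a density-one set of squarefree $n$, so $\lvert E_n(\Q)/\hat\phi(E_n'(\Q))\rvert$ is bounded there, contributing $O(\sqrt N)$. For the denominator, the results of Heath-Brown and Xiong--Zaharescu on the distribution of the $\hat\phi$-Selmer rank show that $\dim_{\F_2}\mathrm{Sel}^{\hat\phi}(E_n/\Q)\to\infty$ along a density-one set (typically of size $\asymp \log\log n$), so that the average size $\tfrac{1}{\sqrt N}\sum_{n\le\sqrt N}\lvert\mathrm{Sel}^{\hat\phi}(E_n/\Q)\rvert$ diverges. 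Hence the denominator is of strictly larger order than the main part $O(\sqrt N)$ of the numerator, and the ratio tends to $0$.

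The main obstacle is to control the exceptional $n$ of positive rank, which can inflate the numerator: when $\Sha(E_n/\Q)[\hat\phi]$ is small the soluble count is comparable to the full Selmer size, so I must ensure that the density-zero set $\mathcal{E}$ of $n$ with large rank does not dominate $\sum_{n\le\sqrt N}\lvert\mathrm{Sel}^{\hat\phi}\rvert$. I would handle this with the quantitative tail estimates in the cited works, namely the rapid decay of the proportion of $n$ with $\rank E_n\ge r$ coming from Smith together with Heath-Brown's moment bounds for $\lvert\mathrm{Sel}^{\hat\phi}\rvert$, applying Cauchy--Schwarz to obtain $\sum_{n\in\mathcal{E}}\lvert\mathrm{Sel}^{\hat\phi}(E_n/\Q)\rvert=o\bigl(\sum_{n\le\sqrt N}\lvert\mathrm{Sel}^{\hat\phi}(E_n/\Q)\rvert\bigr)$. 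A secondary point requiring care is the exact matching at the places $2$ and $\infty$ and on the torsion classes, so that the locally soluble forms biject precisely with $\mathrm{Sel}^{\hat\phi}(E_n/\Q)$ rather than merely up to bounded index.
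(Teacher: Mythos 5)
Your setup --- interpreting each $f\in\mathcal{F}^0_{-n^2}$ as a torsor for the descent by $\widehat{\varphi_1}\colon E_{1,n}\to E_n$, so that locally soluble forms are counted by $\#\Sel{\widehat{\varphi_1}}{E_{1,n}}$ and soluble forms by $\#E_n(\Q)/\widehat{\varphi_1}(E_{1,n}(\Q))$ --- and your treatment of the denominator both agree with the paper. The paper likewise gets $\sum_{n<X}\#\Sel{\widehat{\varphi_1}}{E_{1,n}}\gg X\log\log X$ from Xiong--Zaharescu's first-moment formula for $s(n,\widehat{\varphi_1})=\dim_{\F_2}\Sel{\widehat{\varphi_1}}{E_{1,n}}-2$ together with the elementary bound $2^{s}\ge 1+s$. (The matching issue you flag at the end is real and is what the paper's Section~3 integrality lemma is for: every locally soluble class in $W^0_{-n^2}(\Q)$ contains a unique \emph{integral} representative in $\mathcal{F}^0_{-n^2}$, so the count of forms really equals the Selmer count.)

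The gap is in the numerator. After using Smith to bound the contribution of the density-one set where $\rank E_n\le 1$ by $O(\sqrt N)$, you are left needing $\sum_{n\in\mathcal{E}}\#\Sel{\widehat{\varphi_1}}{E_{1,n}}=o\bigl(\sum_{n}\#\Sel{\widehat{\varphi_1}}{E_{1,n}}\bigr)$ for the exceptional set $\mathcal{E}$, and you propose to get this from ``Heath-Brown's moment bounds for $\lvert\mathrm{Sel}^{\hat\phi}\rvert$'' plus Cauchy--Schwarz. No such bounds exist: Heath-Brown's moment estimates are for the full $2$-Selmer group $\Sel{2}{E_n}$, and they do not control $\Sel{\widehat{\varphi_1}}{E_{1,n}}$, which is not contained in the image of $\Sel{2}{E_n}$ and whose \emph{average} size already diverges like $\log\log X$; its higher moments grow like powers of $\log X$, so the Cauchy--Schwarz step would yield $\lvert\mathcal{E}\rvert^{1/2}\cdot O\bigl(X^{1/2}(\log X)^{c}\bigr)$, which is not $o(X\log\log X)$ for a generic density-zero $\mathcal{E}$. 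The repair --- and the paper's actual argument --- is to bound the soluble count by the \emph{right} Selmer group pointwise: $E_n(\Q)/\widehat{\varphi_1}(E_{1,n}(\Q))$ is a quotient of $E_n(\Q)/2E_n(\Q)\subseteq\Sel{2}{E_n}$, hence $\#\mathcal{F}^{0,\text{sol}}_{-n^2}\le\#\Sel{2}{E_n}$ for every $n$, and Heath-Brown's first moment (extended to even $n$) gives $\sum_{n<X}\#\Sel{2}{E_n}=O(X)$ outright. With that bound, Smith's theorem, the exceptional set, and Cauchy--Schwarz are all unnecessary here; the paper saves Smith for the lower bound in its Theorem~1.5.
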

Note that we also obtain similar theorems for four slightly different subfamilies (see \cref{main1variant} and \cref{phihat2and3}). 

Bhargava's and Bhargava--Ho's results are deeply related to the average size of Selmer groups in families of elliptic curves.
For example, to prove \cref{Bhargava}, we first interpret the locally soluble binary quartics as an element of 2-Selmer group of an elliptic curve. Then \cref{Bhargava} is obtained by using the average size of $2$-Selmer groups in all elliptic curves.
Similarly, to prove our results, \cref{main1,main2}, we interpret locally soluble binary quartic forms in $\mathcal{F}_M^{B}$ as
elements of $2$-isogeny or its dual isogeny Selmer groups in the family of elliptic curves $E_n \colon y^2=x^3-n^2x$ where $n$ runs over all squarefree positive integers.
Then the results follow from the average sizes of those Selmer groups.
The average sizes of these Selmer groups in the family have been studied a lot,
for example \cite{Aoki,Feng1996,Feng-Xiong2004,Feng-Xue,Heath-Brown1993,Heath-Brown1994,Lozano-Robledo,Vrecica,Xiong-Zaharescu2009,Yu2005,Yu2006}.
In particular,
our results are deeply relied on the results on \cite{Heath-Brown1993,Xiong-Zaharescu2009}.

Key facts to prove \cref{main1,main2} are
the difference in the order between the number of locally soluble quartics and the number of soluble quartics.
Especially, in the case of \cref{main2}, 
there exist much more locally soluble quartics than soluble quartics.
Hence, we next consider whether there are subsets of locally soluble forms comparable to soluble ones in the sense of order.
To answer this problem,
we introduce the condition which is called  \emph{strictly locally soluble}.

Let $n$ be a squarefree integer.
A curve $C_f\colon z^2=f(x,y)$ where $f\in \mathcal{F}^{0}_{-n^2}$ is called strictly locally soluble if the curve ``comes from'' $2$-Selmer groups of $E_n$.
We will explain the meaning of ``comes from'' explicitly in Section \ref{proofofmain3}.
In a similar manner to a locally soluble quartic,
we call a binary quartic form $f(x,y)$ strictly locally soluble when $C_f$ is strictly locally soluble.
Then, 
we answer the above problem of comparable subsets of locally soluble quartics, which is our last theorem.

\begin{theorem}[{see \cref{keytheorem}}]\label{main3} 
The proportion of soluble forms is greater than $42\%$ when strictly locally soluble forms $f\in\bigcup_{\substack{n\geq 1\\ \text{$n$: sqf.}}}\mathcal{F}^{0}_{-n^2}$ are ordered by the Bhargava--Ho height.
\end{theorem}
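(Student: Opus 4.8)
The plan is to translate the statement into the arithmetic of the $2$-isogeny descent on $E_n\colon y^2=x^3-n^2x$ and then feed in the distributional inputs of Heath-Brown, Xiong--Zaharescu and Smith. First I would record the descent dictionary. Writing $\phi\colon E_n\to\bar E_n$, $\bar E_n\colon y^2=x^3+4n^2x$, for the $2$-isogeny with kernel $\langle(0,0)\rangle$, a form $f=ax^4+cy^4\in\mathcal F^0_{-n^2}$ with $a$ squarefree is exactly the homogeneous space $C_a\colon w^2=a\,u^4-(n^2/a)v^4$ attached to the class $a\in\Q^\times/\Q^{\times2}$ under the descent map $\alpha\colon E_n(\Q)\to\Q^\times/\Q^{\times2}$, $(x,y)\mapsto x$. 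Under this dictionary $C_f$ is soluble iff $a\in\operatorname{im}\alpha=E_n(\Q)/\hat\phi(\bar E_n(\Q))$, while $C_f$ is merely locally soluble iff $a\in\mathrm{Sel}^{\hat\phi}(\bar E_n)$, and $C_f$ is strictly locally soluble (``comes from $\Sel{2}{E_n}$'') iff its class lies in the image of $\Sel{2}{E_n}\to\mathrm{Sel}^{\hat\phi}(\bar E_n)$. Hence, writing $N_{\mathrm{sol}}(n)$ and $N_{\mathrm{sls}}(n)$ for the numbers of soluble and of strictly locally soluble forms, I get $N_{\mathrm{sol}}(n)=|E_n(\Q)/\hat\phi(\bar E_n(\Q))|$ and $N_{\mathrm{sls}}(n)=|\Sel{2}{E_n}|/|\mathrm{Sel}^{\phi}(E_n)|$, the latter coming from the exact sequence $0\to\mathrm{Sel}^{\phi}(E_n)\to\Sel{2}{E_n}\to\mathrm{Sel}^{\hat\phi}(\bar E_n)$ together with the vanishing of the torsion correction term $\bar E_n(\Q)[\hat\phi]/\phi(E_n(\Q)[2])$.

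Since $H_{\mathrm{BH}}(f)=|ac|=n^2$, ordering by height amounts to ordering by $n$, so the proportion equals $\lim_{X\to\infty}\big(\sum_{n\le\sqrt X}N_{\mathrm{sol}}(n)\big)\big/\big(\sum_{n\le\sqrt X}N_{\mathrm{sls}}(n)\big)$, the sums running over squarefree $n$. I would therefore estimate the two sums separately. For the numerator, the full rational $2$-torsion of $E_n$ gives $\langle-1,n\rangle\subseteq\operatorname{im}\alpha$, so $N_{\mathrm{sol}}(n)\ge4$ for all $n>1$; and when $\rank E_n\ge1$ a free generator enlarges the image, giving $N_{\mathrm{sol}}(n)=8$ provided the Mordell--Weil contribution is visible in the $\alpha$-descent. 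By Smith's theorem, $100\%$ of squarefree $n\equiv5,6,7\pmod8$—a subset of density $1/2$ among squarefree integers—have $\rank E_n=1$, while $100\%$ of $n\equiv1,2,3\pmod8$ have rank $0$. Combined with the control of the complementary Selmer group $\mathrm{Sel}^{\phi}(E_n)$ forcing the rank into the $\alpha$-descent, this yields $\sum_{n\le\sqrt X}N_{\mathrm{sol}}(n)\sim 6\cdot\#\{n\le\sqrt X:\ n\text{ squarefree}\}$.

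For the denominator I would invoke the distribution of $\dim_{\F_2}\Sel{2}{E_n}$ due to Heath-Brown together with the distribution of the isogeny Selmer ranks $\dim\mathrm{Sel}^{\phi}(E_n)$ and $\dim\mathrm{Sel}^{\hat\phi}(\bar E_n)$ due to Xiong--Zaharescu, to evaluate the average $A_{\mathrm{sls}}$ of $N_{\mathrm{sls}}(n)=2^{\dim\Sel{2}{E_n}-\dim\mathrm{Sel}^{\phi}(E_n)}$. The point is that although the ``locally soluble'' count $2^{\dim\mathrm{Sel}^{\hat\phi}(\bar E_n)}$ has divergent average—this is precisely what forces the $0\%$ of \cref{main2}—subtracting $\dim\mathrm{Sel}^{\phi}(E_n)$, equivalently passing to the classes that lift to $\Sel{2}{E_n}$, tames the tail because a large $\mathrm{Sel}^{\hat\phi}(\bar E_n)$ is dominated by a large $\Sha(\bar E_n)[\hat\phi]$ that fails to lift, so the cokernel of $\delta$ absorbs the growth and $A_{\mathrm{sls}}$ is finite. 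Writing the proportion as $6/A_{\mathrm{sls}}$ and checking $A_{\mathrm{sls}}<6/0.42$ then gives the bound $>42\%$.

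The main obstacle I expect is twofold. First, on the numerator side, one must show that for the rank-$1$ curves produced by Smith the free generator really contributes to the $\mathcal F^0_{-n^2}$ (i.e.\ $\alpha$-) descent rather than to the dual one; this is where the smallness of $\mathrm{Sel}^{\phi}(E_n)$ on the relevant congruence classes must be exploited so that $|\bar E_n(\Q)/\phi(E_n(\Q))|=1$ and hence $N_{\mathrm{sol}}(n)=8$. Second, on the denominator side, one must pass rigorously from the limiting distributions to the average of $N_{\mathrm{sls}}(n)$, which requires uniform tail control so that the quotient of the two partial sums converges to the quotient of the averages rather than being skewed by the a priori heavy upper tail of the Selmer dimensions; establishing this convergence, and thereby the numerical value of $A_{\mathrm{sls}}$, is the technical heart of the argument.
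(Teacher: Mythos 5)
Your overall strategy coincides with the paper's: translate $\mathcal{F}^{0}_{-n^2}$ into the $\widehat{\varphi_1}$-descent on $E_n$, lower-bound the soluble count by $4$ (torsion) plus a rank contribution certified by Smith, use the smallness of $\Sel{\varphi_1}{E_n}$ (Xiong--Zaharescu, \cref{Selmergrp}) to push the rank into the $\widehat{\varphi_1}$-side, and upper-bound the strictly locally soluble count through $\Sel{2}{E_n}$ via Heath-Brown. However, there are two concrete gaps. First, you misquote Smith's theorem: the unconditional input (\cref{goldfeldconj}) is that $\rank E_n(\Q)=1$ for at least $62.9\%$, $41.9\%$, $62.9\%$ of squarefree $n\equiv -1,-2,-3\pmod{8}$ respectively, i.e.\ at least $55.9\%$ on average over those three residue classes --- not $100\%$. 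The $100\%$ statement is the Goldfeld conjecture, which the paper invokes only in a closing remark to get the conditional value $1/2$. With the correct input your numerator average per squarefree $n$ drops from $6$ to $4(6+0.559\times 3)/6\approx 5.118$, and the final lower bound becomes $2.559/6\approx 0.4265$ rather than $1/2$. The claimed ``$>42\%$'' survives, but only by a margin of about half a percentage point, so the constant cannot be obtained from a misstated version of the input.

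Second, your denominator is left unproved. You correctly observe that $\#\mathcal{F}^{0,\mathrm{sls}}_{-n^2}=\#\Sel{2}{E_n}/\#\Sel{\varphi_1}{E_n}$ (after checking the torsion correction term vanishes), but you never produce the numerical average $A_{\mathrm{sls}}$, deferring it as ``the technical heart.'' In fact no delicate tail analysis or convergence of distributions is needed: since the theorem is a $\liminf$ lower bound, one only needs an \emph{upper} bound for the denominator, and the crude inequality $\#\mathcal{F}^{0,\mathrm{sls}}_{-n^2}\le\#\Sel{2}{E_n}$ combined with Heath-Brown's first-moment bound $\sum_{n\in S(X,h)}\#\Sel{2}{E_n}\le 12\,\#S(X,h)+o(X)$ (\cref{Heath-Brown}, which also requires the extension from odd $n$ to the even residue classes $h=\pm 2$) gives the average $12$ per squarefree $n$ directly. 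Your worry about the quotient of partial sums converging to a quotient of averages is therefore misplaced, but without the explicit constant $12$ your argument does not yield any numerical proportion at all. Relatedly, your numerator needs the average of $\dim_{\F_2}E_{1,n}(\Q)/\varphi_1(E_n(\Q))$ to be $o(1)$ so that the rank lands in the $\widehat{\varphi_1}$-descent for $100\%$ of $n$; you identify this correctly, and it is exactly \cref{Selmergrp}, but it should be stated as an averaged estimate rather than the pointwise triviality $\#(E_{1,n}(\Q)/\varphi_1(E_n(\Q)))=1$, which fails for infinitely many $n$.
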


To prove this theorem,
we combine Heath-Brown's result \cite{Heath-Brown1993} and Browning's discussion \cite[Proof of Theorem 1.3]{Browning2017} (cf.\ \cite[Proof of Theorems $1$ and $2$]{Bhargava2014}).
Actually, we can show easily that the proportion appeared in \cref{main3} is strictly positive.
This is because we can immediately find soluble elements, for example $x^4-n^2y^4, -x^4+n^2y^4, nx^4-ny^4, -nx^4+ny^4\in\mathcal{F}^{0}_{-n^2}$.
These elements correspond to $2$-torsions of $E_n(\Q)$.
Our theorem states that there are still many soluble elements even if we remove the soluble elements corresponding to $2$-torsion points in $E_n(\Q)$.
To prove this theorem,
Smith's result \cite{Smith} on the proportion of $E_n$ with positive ranks plays an important role.

The plan of this paper is as follows.
In Section \ref{setting},
we prepare some notations or properties which we use in this paper.
In Section \ref{integral},
we introduce and prove the integrality lemma.
This lemma plays an important role when we consider the relationship between integral binary quartic forms and Selmer groups of $E_n$ for squarefree integers $n$.
In Section \ref{proofofmain1} (resp.\ Section \ref{proofofmain2}, and Section \ref{proofofmain3}), we state the explicit forms of \cref{main1} (resp.\ \cref{main2} and \cref{main3}) and give their proof.

%----------------------
\section{Setting}\label{setting}
%----------------------

For an integer $B$ and $M$, 
we set
\begin{align*}
W^B_M(\Q)&=\{f(x,y)=ax^4+Bx^2y^2+cy^4 \mid a, c \in \Q, ac=M\},\\
W^B_M(\Z)&=\{f(x,y)=ax^4+Bx^2y^2+cy^4 \mid a, c \in \Z, ac=M\}.
\end{align*}
The discriminants and Bhargava--Ho height of $f \in W^B_M(\Q)$ is determined by $B, M$ since
\begin{align}
    %I(f) &= B^2 + 12M, \\
    %J(f) &= -2B(B^2 - 36M), \\
    \Disc(f) &= 16M(B^2 - 4M)^2, \label{DiscDef}\\
    H_{\mathrm{BH}}(f) &= \max\{ B^2, |M| \}. \label{BHht}
\end{align}
In this paper, we mainly consider the \emph{nondegenerate} quartics, i.e., the quartics with $\Disc(f) \neq 0$.
By the above description, the nondegeneracy of $f \in W^B_M(\Q)$ only depends on $B$ and $M$.

A nondegenerate quartic $f(x,y)\in W^B_M(\Q)$ defines
 a genus one curve $C_f\colon z^2=f(x,y)$ over $\Q$.
We write the subsets of $W^{B}_M(\Q)$ of locally soluble (resp.\ soluble) binary quartic forms as $W^B_M(\Q)^{\text{ls}}$ (resp.\ $W^B_M(\Q)^{\text{sol}}$).  In concrete terms, 
\begin{align*}
    W^B_M(\Q)^{\text{ls}} &= \{f\in W^B_M(\Q)\mid \text{$C_{f}(\Q_v)\neq\emptyset$ for all place $v$ of $\Q$} \},\\
    W^B_M(\Q)^{\text{sol}} &= \{f\in W^B_M(\Q)\mid \text{$C_f(\Q)\neq \emptyset$}\}. 
\end{align*}
Similarly we define for $W^B_M(\Z)^{\text{ls}}$ and $W^B_M(\Z)^{\text{sol}}$.

We define an equivalence relation
\begin{align*}
    &ax^4+Bx^2y^2+cy^4 \sim a'x^4+Bx^2y^2+c'y^4
\end{align*}
on $W^B_M(\Q)$ if $a'=s^2a$ and $c'=s^{-2}c$ for some $s \in \Q^*$.
This equivalence preserves solubility and local solubility.
We write $[f]$ as an equivalent class of $f\in W^B_M(\Q)$.
We set
\begin{align*}
    \mathcal{F}^{B}_M(\Q) =
    \{ax^4+Bx^2y^2+cy^4\in W^B_M(\Q) \mid \text{$0\leq \ord_p a\leq 1$ for all places $v$ in $\Q$}\}.
\end{align*}
Then,
we have $\mathcal{F}^{B}_M = \mathcal{F}^{B}_M(\Q) \cap W^B_M(\Z)$. The set
$\mathcal{F}^B_M(\Q)$ (resp.\ $\mathcal{F}^B_M$) is a complete system of representatives of the equivalence classes in $W^B_M(\Q)$ (resp.\ $W^B_M(\Z)$).
We also set
\begin{align*}
    \mathcal{F}^{B, \text{sol}}_M &= \mathcal{F}^B_M \cap W^{B}_{M}(\Z)^{\text{sol}},\\
    \mathcal{F}^{B, \text{ls}}_M &= \mathcal{F}^B_M \cap W^{B}_{M}(\Z)^{\text{ls}}.
\end{align*}

As we mentioned in the introduction, 
we consider the elliptic curves $E_n$ and over $\Q$ defined as
\begin{align*}
    E_n \colon y^2 = x^3 - n^2x
\end{align*}
for an integer $n > 0$.
We write the point of infinity of $E_n$ as $\infty$.
The curves $E_n$ have three types of isogenies:
\begin{align*}
    \varphi_1 &\colon E_n \to E_{1,n} \colon
    y^2=x^3+4n^2x, & (x,y) &\mapsto \left( \frac{y^2}{x^2}, -\frac{y(n^2+x^2)}{x^2} \right), \\
    \varphi_2 &\colon E_n \to E_{2,n}\colon y^2=x(x^2-6nx+n^2), & (x,y) &\mapsto \left( \frac{y^2}{(x+n)^2}, \frac{y(2n^2-(x+n)^2)}{(x+n)^2} \right), \\
    \varphi_3 &\colon E_n \to E_{3,n}\colon y^2=x(x^2+6nx+n^2), & (x,y) &\mapsto \left( \frac{y^2}{(x-n)^2}, \frac{y(2n^2-(x-n)^2)}{(x-n)^2} \right).
\end{align*}
For $i = 1,2,3$, we write the dual isogenies of $\varphi_i$ as $\widehat{\varphi_i}$.
We write the Selmer group and weak Mordell--Weil group corresponding to the isogeny $\varphi_i$ 
as $\Sel{\varphi_i}{E_n}$ and $E_{i,n}(\Q)/\varphi_i(E_n(\Q))$ and similar for $\widehat{\varphi_i}$.
The relation between these isogenies and the set $W^{B}_{M}(\Q)$ is summarized in the following lemma.

\begin{lemma}[{cf. \cite[Proposition X.4.9]{SilvermanAEC}}]\label{correspond}
For any integer $n \neq 0$, we obtain
\begin{align*}
    &\Q^*/\Q^{*2}\cong H^1(\Q, E_n[\varphi_1])\cong W^{0}_{4n^2}(\Q)/{\sim}, & d &\mapsto \left[dx^4+\frac{4n^2}{d}y^4 \right],\\
    &\Q^*/\Q^{*2}\cong H^1(\Q, E_n[\varphi_2])\cong W^{-6n}_{n^2}(\Q)/{\sim}, & d &\mapsto \left[ dx^4 - 6nx^2y^2+(n^2/d)y^4\right],\\ 
    &\Q^*/\Q^{*2}\cong H^1(\Q, E_n[\varphi_3])\cong W^{6n}_{n^2}(\Q)/{\sim}, & d &\mapsto \left[ dx^4 + 6nx^2y^2+(n^2/d)y^4\right],\\
    &\Q^*/\Q^{*2}\cong H^1(\Q, E_{1,n}[\widehat{\varphi_1}])\cong W^{0}_{-n^2}(\Q)/{\sim}, & d &\mapsto \left[dx^4-\frac{n^2}{d}y^4 \right],\\
    &\Q^*/\Q^{*2}\cong H^1(\Q, E_{2,n}[\widehat{\varphi_2}])\cong W^{3n}_{2n^2}(\Q)/{\sim}, & d &\mapsto \left[dx^4 + 3nx^2y^2+2(n^2/d)y^4\right],\\ 
    &\Q^*/\Q^{*2}\cong H^1(\Q, E_{3,n}[\widehat{\varphi_3}])\cong W^{-3n}_{2n^2}(\Q)/{\sim}, & d &\mapsto \left[ dx^4- 3nx^2y^2+2(n^2/d)y^4\right]. 
\end{align*}

In each case, the set of equivalence classes of locally soluble quartics (resp.\ soluble quartics) corresponds to the Selmer groups  (resp.\ weak Mordell--Weil groups).
\end{lemma}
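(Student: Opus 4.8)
The plan is to prove all six lines at once, since each is an instance of descent by a $2$-isogeny and differs only in the Weierstrass data of the curves involved. The uniform input is that $x^3-n^2x = x(x-n)(x+n)$, so $E_n$ has full rational $2$-torsion; consequently every kernel $E_n[\varphi_i]$ (resp.\ $E_{i,n}[\widehat{\varphi_i}]$) is generated by a rational point of order $2$. The argument then breaks into three parts: identify the cohomology group with $\Q^*/\Q^{*2}$; realize each class explicitly as a binary quartic; and match local and global solubility of that quartic with the Selmer group and the weak Mordell--Weil group.

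For the first part, each kernel is the constant group scheme $\Z/2\Z\cong\mu_2$, so the Kummer sequence $1\to\mu_2\to\G_m\to\G_m\to 1$ together with Hilbert's Theorem 90 gives $H^1(\Q,E_n[\varphi_i])\cong H^1(\Q,\mu_2)\cong\Q^*/\Q^{*2}$, which is the first isomorphism in each line. For the second part I would invoke the $2$-isogeny descent of \cite[Proposition X.4.9]{SilvermanAEC} in the following packaged form: for a $2$-isogeny $\psi\colon A\to B$ whose target is written as $Y^2=X(X^2+\alpha X+\beta)$ with the origin the rational $2$-torsion point of $B$ organizing the descent, the connecting map sends $(X,Y)\in B(\Q)$ to the square-class of $X$, and the homogeneous space attached to $d\in\Q^*/\Q^{*2}\cong H^1(\Q,A[\psi])$ is the genus one curve $z^2=dx^4+\alpha x^2y^2+(\beta/d)y^4$, a twist of $\psi$ covering $B$. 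For each of the six isogenies one writes the target ($E_{i,n}$, resp.\ $E_n$) in the model $Y^2=X(X^2+\alpha X+\beta)$ in which the origin is the relevant $2$-torsion point and reads off $(\alpha,\beta)$; this reproduces exactly the six displayed quartics, each lying in the asserted $W^{B}_{M}(\Q)$ since then $\alpha=B$ and $ac=\beta=M$. Because $\sim$ is precisely $d\mapsto s^2d$, trivial in $\Q^*/\Q^{*2}$, the assignment $d\mapsto[f_d]$ descends to a bijection onto $W^{B}_{M}(\Q)/{\sim}$, with inverse recording the leading coefficient modulo squares.

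For the third part, the descent short exact sequence (taking $\varphi_1$ for concreteness) reads
\[
0 \to E_{1,n}(\Q)/\varphi_1(E_n(\Q)) \xrightarrow{\ \delta\ } H^1(\Q,E_n[\varphi_1]) \to H^1(\Q,E_n)[\varphi_1] \to 0,
\]
and similarly in the remaining cases. The image of $d$ in $H^1(\Q,E_n)$ is the class of $C_{f_d}$ as a torsor under its Jacobian $E_n$; by exactness this class vanishes if and only if $d\in\image\delta$, and it vanishes if and only if $C_{f_d}$ has a $\Q$-rational point. Hence the soluble quartics correspond exactly to the image of the weak Mordell--Weil group $E_{1,n}(\Q)/\varphi_1(E_n(\Q))$. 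Running the identical argument over every completion $\Q_v$ shows that $C_{f_d}$ is everywhere locally soluble if and only if $d$ lies in the image of every local connecting map, which is the defining condition of $\Sel{\varphi_1}{E_n}$; thus the locally soluble quartics correspond to the Selmer group.

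I expect the main obstacle to be the explicit descent computation in the second part: deriving the homogeneous-space equation $z^2=dx^4+\alpha x^2y^2+(\beta/d)y^4$ from the connecting map, and verifying for all six isogenies that the target model $(\alpha,\beta)$ is exactly the pair appearing in the lemma. This requires care in choosing the shifted Weierstrass models of $E_n$ used for $\widehat{\varphi_2}$ and $\widehat{\varphi_3}$, which must produce $(\alpha,\beta)=(\pm 3n,2n^2)$ rather than the naive $(0,-n^2)$. One must also check cleanly that a $\Q$-point on the genus one curve $C_{f_d}$ is the same datum as the vanishing of its $E_n$-torsor class, so that \emph{soluble} matches \emph{image of $\delta$} and not some weaker condition; this is precisely where the identification of $C_{f_d}$ with the $\psi$-covering, rather than with an abstract torsor, is essential.
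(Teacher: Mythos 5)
Your plan is correct and coincides with what the paper actually does: the lemma is stated with no proof beyond the citation to \cite[Proposition X.4.9]{SilvermanAEC}, and your three steps (Kummer theory for $H^1(\Q,E[\psi])\cong\Q^*/\Q^{*2}$, the explicit homogeneous space $z^2=dx^4+\alpha x^2y^2+(\beta/d)y^4$ read off from the target model, and the descent exact sequence matching solubility with the weak Mordell--Weil group and local solubility with the Selmer group) are exactly the content of that reference. You are also right that the only delicate point is normalizing each target Weierstrass model so that the $2$-torsion point generating the kernel of the dual isogeny sits at the origin, which is what produces $(\pm 3n, 2n^2)$ rather than $(0,-n^2)$ in the $\widehat{\varphi_2},\widehat{\varphi_3}$ cases.
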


For the quartics in one of six sets appearing in \cref{correspond}, 
the discriminant and Bhargava--Ho height of the quartics only depends on $n$ by \eqref{DiscDef} and \eqref{BHht}. 
In the followings, we mainly consider those quartics with $n \neq 0$.
In particular, we do not consider degenerate quartics.

%----------------------
\section{integrality lemma}\label{integral}
%----------------------

In this section, 
we give an integrality lemma analogous to \cite[Lemma 3.2]{Browning2017}. It states that locally soluble quartics in $W^{B}_{M}(\Q)$ is equivalent to integral ones.
It is presumably well-known to experts; for example,
see \cite[Proposition X.4.9]{SilvermanAEC} for treatments except $p=2$.
We include them for reader's convenience.

\begin{lemma}\label{integralSummarized}
Fix integers $B, M \in \Z$ with $M(B^2 - 4M) \neq 0$. 
Any equivalence class of $W^{B}_{M}(\Q)^{\text{ls}}$ contains a unique element of $\mathcal{F}^{B}_{M}$. 
\end{lemma}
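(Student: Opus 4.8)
The plan is to reduce the whole statement to the behaviour of the leading coefficient. Since $ac = M$ is fixed and nonzero, the assignment $f = ax^4 + Bx^2y^2 + cy^4 \mapsto a$ identifies $W^{B}_{M}(\Q)$ with $\Q^*$, and under this identification the equivalence relation becomes $a \sim s^2 a$; hence $W^{B}_{M}(\Q)/{\sim}\;\cong\;\Q^*/\Q^{*2}$, with $c = M/a$ recovered from $a$. The condition $0 \le \ord_p a \le 1$ for all $p$ defining $\mathcal{F}^{B}_{M}(\Q)$ says exactly that $a$ is a squarefree integer, and membership in $\mathcal{F}^{B}_{M} = \mathcal{F}^{B}_{M}(\Q) \cap W^{B}_{M}(\Z)$ requires in addition $c = M/a \in \Z$, i.e.\ $a \mid M$. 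So I must show that each locally soluble class has a representative whose leading coefficient is a squarefree \emph{divisor} of $M$, and that such a representative is unique.

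For \emph{uniqueness} I would argue that two squarefree integers agreeing in $\Q^*/\Q^{*2}$ have the same sign and the same set of prime divisors, hence are equal. Therefore each class contains at most one element of $\mathcal{F}^{B}_{M}(\Q)$, a fortiori at most one element of $\mathcal{F}^{B}_{M}$. This step uses nothing about local solubility.

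The substance is \emph{existence}. Given a class in $W^{B}_{M}(\Q)^{\text{ls}}$, let $a_0$ be the unique squarefree integer representing it and put $c_0 = M/a_0$ and $f_0 = a_0 x^4 + Bx^2y^2 + c_0 y^4$. Since the equivalence preserves local solubility, $f_0$ is locally soluble, so $C_{f_0}(\Q_p) \ne \emptyset$ for every prime $p$. It remains to prove $a_0 \mid M$, for then $c_0 \in \Z$ and $f_0 \in \mathcal{F}^{B}_{M}$. As $a_0$ is squarefree it suffices to show every prime $p \mid a_0$ divides $M$. Suppose instead $p \mid a_0$ but $p \nmid M$; then $\ord_p a_0 = 1$, $\ord_p M = 0$, and $\ord_p c_0 = -1$. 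I would then run a valuation-parity argument at $p$: take any point of $C_{f_0}(\Q_p)$, scale its $(x,y)$-coordinates to be primitive in $\Z_p$, and compare the valuations of the three terms $a_0 x^4$, $Bx^2y^2$, $c_0 y^4$ of $f_0(x,y)$. With $\ord_p a_0 = 1$, $\ord_p B \ge 0$, $\ord_p c_0 = -1$ and $\min(\ord_p x, \ord_p y) = 0$, a short case check on whether $x$ or $y$ is a $p$-adic unit shows that the smallest valuation among the three terms is always attained uniquely, and by an odd-valuation term. Hence $\ord_p f_0(x,y)$ is finite and odd for every primitive $(x,y)$, so $f_0(x,y)$ is never a square in $\Q_p$; this contradicts $C_{f_0}(\Q_p) \ne \emptyset$.

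The main obstacle is precisely this local computation, and its only delicate point is that it must hold uniformly for all primes, including $p = 2$. The argument above is designed to avoid any appeal to Hensel's lemma or to the structure of squares in $\Q_p^*$: it uses only that a square in $\Q_p^*$ has even valuation, which holds at every prime. This is the respect in which the lemma refines references such as \cite[Proposition X.4.9]{SilvermanAEC} that treat $p = 2$ separately, and it parallels the approach of \cite[Lemma 3.2]{Browning2017}.
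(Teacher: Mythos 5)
Your proposal is correct and follows essentially the same route as the paper's proof: reduce to the representative with squarefree leading coefficient, then show integrality of the other coefficient by a parity-of-valuation argument at primes $p \nmid M$ using a local point and the fact that $\ord_p(z^2)$ is even. The only cosmetic difference is that you normalize $(x,y)$ to be $p$-primitive and do a direct case check on which of $x,y$ is a unit, whereas the paper compares $\ord_p(aX^4)$ and $\ord_p(cY^4)$ via congruences mod $4$ and the identity $\ord_p(aX^4)+\ord_p(cY^4)=2\ord_p(X^2Y^2)$; both yield the same conclusion that the minimal valuation among the three terms is attained uniquely by an odd-valuation term.
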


\begin{proof}
    The uniqueness follows from the fact that $\mathcal{F}^{B}_{M}(\Q)$ contains a unique element in each equivalence class of $W^{B}_M(\Q)^{\text{ls}}$.
    We have to show that for each equivalence class $[f]$ of $W^{B}_{M}(\Q)$, the unique element in $[f] \cap \mathcal{F}^{B}_{M}(\Q)$ has integral coefficients.

    Take a locally soluble quartic $f(x,y) = ax^4 + Bx^2y^2 + cy^4 \in W^{B}_{M}(\Q)^{\mathrm{ls}}$.
    Assume that $f \in \mathcal{F}^{B}_{M}(\Q)$, or equivalently, that $a \in \Z$ and $0 \le \ord_p a \le 1$ for any prime $p$. 
    Since $a, B \in \Z$, we only have to show that $c \in \Z$. It is enough to prove  $\ord_p c \ge 0$ for any prime $p$.
    
    For a prime $p$ dividing $M$, we have $\ord_p c \ge 0$ since $ac = M$.
    Thus, we may assume that $p \nmid M$.
    Moreover, since $ac = M \in \Z$ and $\ord_p a \le 1$, we only have to consider the case when $(\ord_p a, \ord_p c) = (1, -1)$.
    
    Since $f$ is locally soluble, we may take a nontrivial solution $(Z, X, Y)$ of $z^2 = f(x,y)$ in $\Q_p$.
    By scaling, we may assume $X, Y, Z \in \Z_p$.
    Then, the followings hold:
\begin{itemize}
    \item The two values $\ord_p(aX^4), \ord_p(cY^4)$ are odd and distinct because 
    \begin{align*}
        \text{$\ord_p(aX^4) \equiv 1\bmod{4}$ and $\ord_p(cY^4) \equiv -1 \bmod{4}$}.
    \end{align*}
    \item We would like to compare $\ord_p(nX^2Y^2)$ with $\ord_p(aX^4)$ and $\ord_p(cY^4)$. Since $p \nmid M$, we compute
    \begin{align*}
    \ord_p (aX^4)+\ord_p (cY^4)&=\ord_p (acX^4Y^4)\\
    &=\ord_p (M)+\ord_p (X^4Y^4)\\
    &=\ord_p(X^4Y^4)\\
    &=2\ord_p(X^2Y^2).
    \end{align*}
    Thus, we obtain $(\ord_p (aX^4)+\ord_p(cY^4))/2=\ord_p (X^2Y^2)$.
\end{itemize}

    Combining the above two observations, we have
    \begin{align*}
        &\ord_p(aX^4)>\ord_p(X^2Y^2)>\ord_p(cY^4), \\
        \text{ or } &\ord_p(aX^4)<\ord_p(X^2Y^2)<\ord_p(cY^4).
    \end{align*}
    It is enough to consider the former case.
    Since $B \in \Z$, we have two inequalities;
    \begin{align*}
        \text{$\ord_p(aX^4)>\ord_p(cY^4)$ and $\ord_p(BX^2Y^2)>\ord_p(cY^4)$}.
    \end{align*}
    These conclude
    \begin{align*}
        \ord_p(aX^4 + BX^2Y^2 + cY^4) = \ord_p(cY^4),
    \end{align*}
    and the value is odd. 
    However, it contradicts to $Z^2 = aX^4 + BX^2Y^2 + cY^4$ since $\ord_p(Z^2)$ is even.
\end{proof}
Note that we can extend this lemma when $M\neq 0$, but we only consider nondegenerate quartics here.

Applying \cref{integralSummarized} to six cases in \cref{correspond}, we see that each element in Selmer groups of the isogenies $\varphi_i, \widehat{\varphi_i}$ has a unique representative of binary quartics in $\mathcal{F}^{B}_M$ for appropriate $B, M$.
Thus to count integral locally soluble forms in those $\mathcal{F}^{B}_{M}$ exactly corresponds to
count elements of corresponding Selmer groups.

%----------------------
\section{Proof of \cref{main1}}\label{proofofmain1}
%----------------------
Now we prove \cref{main1}.
For $h \in \{\pm 1, \pm 2, \pm 3\}$, define
\begin{align*}
    S(X, h)= \{D\in\Z \mid D\equiv h \bmod{8}, \text{$1 \leq D\leq X$, $D$ : squarefree}\}.    
\end{align*}
The number of elements is estimated as
\begin{align*}
    \# S(X, h) &= \frac{1}{\pi^2} X + o(X)
\end{align*}
as $X\to\infty$.
For more detailed estimation, see the proof of  \cite[Lemma 14]{Xiong-Zaharescu2009}.

\begin{lemma}[{\cite[Theorem 6]{Xiong-Zaharescu2009}}]\label{Selmergrp}
For $i\in\{1,2,3\}$,
one has
\begin{align*}
    \sum_{n\in S(X,h)}\#\Sel{{\varphi_i}}{E_{n}} &= \#S(X,h) + o(X)
    \end{align*}
    as $X\to \infty$.
    In particular, the contribution of nontrivial elements in $\Sel{{\varphi_i}}{E_{n}}$ is estimated as $o(X)$.
\end{lemma}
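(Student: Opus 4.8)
The plan is to make the Selmer groups completely explicit and then reduce the assertion to a character-sum estimate. First I would combine \cref{correspond} with \cref{integralSummarized} to identify $\Sel{\varphi_i}{E_n}$ with the set of locally soluble classes $[\,dx^4+(M/d)y^4\,]$ for the relevant $(B,M)$, representing each class by a squarefree integer $d$. Local solubility away from $2n\infty$ is automatic (good reduction), so $d$ is forced to be supported on the primes dividing $2n$; solubility at $\infty$ fixes the sign of $d$, solubility at $2$ becomes a fixed congruence condition once $n$ is restricted to a fixed class $h \bmod 8$, and solubility at each odd $p \mid n$ is a quadratic-residue condition. Writing $d$ through its prime-support vector, all of these conditions are $\F_2$-linear with coefficients given by Legendre symbols $\left(\tfrac{q}{p}\right)$ among the primes $p,q \mid n$. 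Hence $\Sel{\varphi_i}{E_n}=\Ker A_n$ for an explicit symbol matrix $A_n$ over $\F_2$, and $\#\Sel{\varphi_i}{E_n}=2^{\dim_{\F_2}\Ker A_n}$.

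Next I would turn the sum into a counting problem. Using $\#\Sel{\varphi_i}{E_n}=\sum_{d}\mathbf{1}[\,d\in\Sel{\varphi_i}{E_n}\,]$ and interchanging the order of summation, the class $d=1$ (the image of $\infty$, which is always locally soluble) contributes exactly $\#S(X,h)$, so the lemma is equivalent to the bound $\sum_{d\neq 1}\#\{\,n\in S(X,h):d\in\Sel{\varphi_i}{E_n}\,\}=o(X)$. Each inner count is the number of squarefree $n\equiv h \bmod 8$ in $[1,X]$ for which all the local conditions defining membership of a fixed nontrivial $d$ hold simultaneously.

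The hard part is the analytic estimate. For a fixed squarefree $d \neq 1$, the conditions cutting out $\{\,n:d\in\Sel{\varphi_i}{E_n}\,\}$ include, at the primes dividing $d$, genuine Jacobi-symbol conditions on $n$ that oscillate as $n$ varies; via quadratic reciprocity these become conditions of the shape $\left(\tfrac{n'}{p}\right)=\pm1$ for the relevant part $n'$ of $n$, and I would estimate the number of admissible $n\le X$ by character-sum and large-sieve bounds of the type used by Heath-Brown \cite{Heath-Brown1993} and Xiong--Zaharescu \cite{Xiong-Zaharescu2009}, gaining a saving over the trivial bound for every $d\neq 1$. I would then sum over $d$, using that $d\mid 2n$ restricts $d$ to products of primes up to $X$ and that the number of relevant $d$ is controlled on average by $(\log X)^{O(1)}$, to conclude that the total is $o(X)$. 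The genuine obstacle is making the cancellation uniform in $d$ and summing the savings so that the bound is $o(X)$ rather than merely $O(X)$: this is exactly where the oscillation of the reciprocity-driven entries of $A_n$ and the equidistribution of Legendre symbols over squarefree moduli are indispensable, and it is the technical core of the cited theorem.
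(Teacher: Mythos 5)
Your proposal follows essentially the same route as the paper: both reduce $\#\Sel{\varphi_i}{E_n}$ to counting divisors $d$ of $2n$ satisfying Legendre-symbol local conditions, extract the main term $\#S(X,h)$ from the trivial class $d=1$, and defer the $o(X)$ bound on the nontrivial classes to the character-sum machinery of Heath-Brown and Xiong--Zaharescu. The paper itself simply cites \cite{Xiong-Zaharescu2009} for odd $h$ and, for even $h$, writes out the same product-of-indicators upper bound (using the local conditions of Feng--Xiong, and dropping the condition at $p=2$) paired with the trivial lower bound from the identity element, so your sketch matches both the structure and the level of detail of the paper's argument.
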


\begin{proof}
For $h=\pm 1, \pm 3$, see the proof of \cite[Theorem 6]{Xiong-Zaharescu2009}.

In the following, we consider the case $h=\pm 2$ and $i=1$. 
Suppose $p \neq 2$.
The conditions on $n, d$ for the existence of $\Q_p$-rational points on $C_{1,d}$ is given by \cite[Lemma 3.1 (1)-(3)]{Feng-Xiong2004}.
As the case $h=\pm 1, \pm 3$, we obtain
\begin{align*}
    \sum_{n \in S(X, h)} \#\Sel{\varphi_1}{E_{n}} 
    &\leq \sum_{n=dd'}\prod_{\substack{p\mid d\\ p\neq 2}} \frac{1}{4}\left(\left(\frac{-1}{p}\right)+1\right)\left(\left(\frac{d'}{p}\right)+1\right)\prod_{\substack{p\mid d'\\ p\neq 2}}\frac{1}{2}\left(\left(\frac{d}{p}\right)+1\right)\\
    &=\#S(X, h)+o(X).
\end{align*}
as $X\to \infty$.
Here, $\left(\frac{\cdot}{\cdot}\right)$ is the Legendre symbol.
Moreover, 
we obtain 
\begin{align*}
    \#S(X, h) \leq \sum_{n\in S(X, h)} \#\Sel{\varphi_1}{E_n}
\end{align*}
by counting the trivial element of $\Sel{\varphi_1}{E_n}$.
Hence, 
we complete the proof.

In a similar manner,
we also obtain the same arguments when we consider the other cases $h = \pm 2$ and $i=2,3$.
\end{proof}

Now, we will prove \cref{main1}.
We can rewrite \cref{main1} explicitly as
\begin{align}\label{main1explicit}
\lim_{X\to\infty} \frac{\#\{f\in \mathcal{F}^{0,\text{sol}}_{4n^2}\mid \text{$n$: sqf.\ and $H_{\text{BH}}(f)=4n^2<X$}\}}{\#\{f\in \mathcal{F}^{0,\text{ls}}_{4n^2}\mid \text{$n$: sqf.\ and $H_{\text{BH}}(f)=4n^2<X$}\}} = 1.
\end{align}
The left hand side quantity is equal to
\begin{align*}
\lim_{X\to\infty}\dfrac{\displaystyle \sum_{\substack{0<n<\sqrt{X}/2\\ \text{$n$: sqf.}}}\#\mathcal{F}^{0, \text{sol}}_{4n^2}}{\displaystyle \sum_{\substack{0<n<\sqrt{X}/2\\ \text{$n$: sqf.}}}\#\mathcal{F}^{0, \text{ls}}_{4n^2}},
\end{align*}
so that by writing $X$ instead of $\sqrt{X}/2$, 
we only have to show the following theorem.
\begin{theorem}\label{main1refined}
\begin{align*}
\lim_{X\to\infty}\dfrac{\displaystyle \sum_{\substack{0<n<X\\ \text{$n$: sqf.}}}\#\mathcal{F}^{0, \text{sol}}_{4n^2}}{\displaystyle \sum_{\substack{0<n<X\\ \text{$n$: sqf.}}}\#\mathcal{F}^{0, \text{ls}}_{4n^2}}=1
\end{align*}
\end{theorem}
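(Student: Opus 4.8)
The plan is to reduce the theorem to \cref{Selmergrp} through the dictionary between binary quartics and Selmer groups. First I would invoke \cref{correspond} in the case $B = 0$, $M = 4n^2$, which corresponds to the isogeny $\varphi_1 \colon E_n \to E_{1,n}$: there the equivalence classes of locally soluble (resp.\ soluble) quartics in $W^{0}_{4n^2}(\Q)$ are in bijection with $\Sel{\varphi_1}{E_n}$ (resp.\ with the weak Mordell--Weil group $E_{1,n}(\Q)/\varphi_1(E_n(\Q))$). By \cref{integralSummarized} every such equivalence class contains a unique integral representative in $\mathcal{F}^{0}_{4n^2}$, so these bijections identify
\[
\#\mathcal{F}^{0, \text{ls}}_{4n^2} = \#\Sel{\varphi_1}{E_n}, \qquad \#\mathcal{F}^{0, \text{sol}}_{4n^2} = \#\bigl(E_{1,n}(\Q)/\varphi_1(E_n(\Q))\bigr).
\]

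Next I would record two elementary facts valid for every squarefree $n$. The exact sequence
\[
0 \to E_{1,n}(\Q)/\varphi_1(E_n(\Q)) \to \Sel{\varphi_1}{E_n} \to \Sha(E_n)[\varphi_1] \to 0
\]
shows that the weak Mordell--Weil group injects into the Selmer group, and its trivial class lies in the image of $E_{1,n}(\Q)$, hence is soluble. Translating back through the identifications above, this yields the pointwise bounds
\[
1 \le \#\mathcal{F}^{0, \text{sol}}_{4n^2} \le \#\mathcal{F}^{0, \text{ls}}_{4n^2} = \#\Sel{\varphi_1}{E_n},
\]
so in particular the numerator of the displayed ratio never exceeds its denominator.

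The heart of the argument is to sum \cref{Selmergrp} over the relevant residue classes. A positive integer $n$ is squarefree only if $n \not\equiv 0, 4 \bmod 8$, so its residue modulo $8$ lies in $\{1,2,3,5,6,7\}$; these are precisely the residues of $h \bmod 8$ for $h \in \{\pm1, \pm2, \pm3\}$. Hence $\bigcup_{h} S(X,h)$ is exactly the set of squarefree $n$ with $0 < n < X$, and summing \cref{Selmergrp} with $i = 1$ over the six values of $h$ gives
\[
\sum_{\substack{0<n<X\\ n:\text{sqf.}}} \#\mathcal{F}^{0, \text{ls}}_{4n^2} = \sum_{\substack{0<n<X\\ n:\text{sqf.}}} \#\Sel{\varphi_1}{E_n} = \sum_{\substack{0<n<X\\ n:\text{sqf.}}} 1 + o(X) = \frac{6}{\pi^2}X + o(X),
\]
so the total contribution of all nontrivial Selmer elements is $o(X)$.

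Finally I would combine these by a squeeze. Subtracting the pointwise lower bound $\#\mathcal{F}^{0,\text{sol}}_{4n^2}\ge 1$ from the Selmer count gives
\[
0 \le \sum_{\substack{0<n<X\\ n:\text{sqf.}}} \#\mathcal{F}^{0, \text{ls}}_{4n^2} - \sum_{\substack{0<n<X\\ n:\text{sqf.}}} \#\mathcal{F}^{0, \text{sol}}_{4n^2} \le \sum_{\substack{0<n<X\\ n:\text{sqf.}}} \bigl(\#\Sel{\varphi_1}{E_n} - 1\bigr) = o(X),
\]
while the denominator satisfies $\sum \#\mathcal{F}^{0, \text{ls}}_{4n^2} = \tfrac{6}{\pi^2}X + o(X) \to \infty$. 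Dividing, the displayed ratio differs from $1$ by $o(X)/\bigl(\tfrac{6}{\pi^2}X + o(X)\bigr) \to 0$, which is the claim. The only genuinely nontrivial input is \cref{Selmergrp}; once it is available the argument is a sandwich, and the main point requiring care is verifying that the six classes $S(X,h)$ exhaust the squarefree integers and that the cases $h = \pm 2$ for $i=1$ are covered by the proof of \cref{Selmergrp}.
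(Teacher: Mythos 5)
Your proposal is correct and follows essentially the same route as the paper: identify $\mathcal{F}^{0,\text{ls}}_{4n^2}$ with $\Sel{\varphi_1}{E_n}$ via \cref{correspond} and \cref{integralSummarized}, note that the identity class is soluble, sum \cref{Selmergrp} over the six classes $h \in \{\pm1,\pm2,\pm3\}$ (which exhaust the squarefree residues mod $8$), and squeeze. The only cosmetic difference is that you phrase the solubility of the trivial class via the exact sequence with $\Sha(E_n)[\varphi_1]$, where the paper simply observes it directly.
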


\begin{proof}
Using \cref{integralSummarized}, we can freely interpret elements of Selmer group $\Sel{\varphi_1}{E_n}$ as elements in $\mathcal{F}^{0, \text{ls}}_{4n^2}$ under the bijection in \cref{correspond}.

In \cref{Selmergrp},
the main term $\#S(X,h)$ comes from identity elements of Selmer groups $\Sel{\varphi_1}{E_n}$, and the other elements are negligible.
Since elements of $\mathcal{F}^{0, \text{ls}}_{4n^2}$ which come from identities are soluble,
the inequalities
\[
    \begin{aligned}
    \sum_{h = \pm 1, \pm 2, \pm 3} \#S(X, h) &= \sum_{\substack{0 < n < X \\ \text{$n$: sqf.}}} 1 \\
    &\le 
    \sum_{\substack{0 < n < X \\ \text{$n$: sqf.}}} \#\mathcal{F}^{0, \text{sol}}_{4n^2}\\
    &\le
    \sum_{\substack{0 < n < X \\
    \text{$n$: sqf.}}}\#\mathcal{F}^{0, \text{ls}}_{4n^2}\\
    &\le \sum_{h = \pm 1, \pm 2, \pm 3} \#S(X, h) + o(X)
    \end{aligned}
\] 
hold as $X\to\infty$.
Hence,
we obtain
\begin{align*}
    \frac{\displaystyle \sum_{\substack{0<n<X\\ \text{$n$: sqf.}}}\#\mathcal{F}^{0, \text{sol}}_{4n^2}}
    {\displaystyle \sum_{\substack{0<n<X\\ \text{$n$: sqf.}}}\#\mathcal{F}^{0, \text{ls}}_{4n^2}} \to 1 
\end{align*}
as $X \to \infty$.
\end{proof}

In a similar manner, 
we also obtain the following theorem for $\Sel{\varphi_2}{E_n}$ and $\Sel{\varphi_3}{E_n}$.

\begin{theorem}\label{main1variant}
\begin{align*}
\lim_{X\to\infty}\frac{\displaystyle\sum_{\substack{0<n<X\\ \text{$n$: sqf.}}}\#\mathcal{F}^{6n, \text{sol}}_{n^2}}{\displaystyle\sum_{\substack{0<n<X\\ \text{$n$: sqf.}}}\#\mathcal{F}^{6n, \text{ls}}_{n^2}}=\lim_{X\to\infty}\frac{\displaystyle\sum_{\substack{0<n<X\\ \text{$n$: sqf.}}}\#\mathcal{F}^{-6n, \text{sol}}_{n^2}}{\displaystyle\sum_{\substack{0<n<X\\ \text{$n$: sqf.}}}\#\mathcal{F}^{-6n, \text{ls}}_{n^2}}=1.
\end{align*}
\end{theorem}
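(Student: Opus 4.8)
The plan is to transcribe the proof of \cref{main1refined} essentially verbatim, replacing the isogeny $\varphi_1$ by $\varphi_3$ for the family $\mathcal{F}^{6n}_{n^2}$ and by $\varphi_2$ for the family $\mathcal{F}^{-6n}_{n^2}$. By \cref{correspond}, the equivalence classes of $W^{6n}_{n^2}(\Q)$ (resp.\ $W^{-6n}_{n^2}(\Q)$) are in bijection with $H^1(\Q, E_n[\varphi_3])$ (resp.\ $H^1(\Q, E_n[\varphi_2])$), and under this bijection the locally soluble classes correspond to $\Sel{\varphi_3}{E_n}$ (resp.\ $\Sel{\varphi_2}{E_n}$). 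Invoking \cref{integralSummarized}, each locally soluble class has a unique integral representative, so that $\#\mathcal{F}^{6n, \text{ls}}_{n^2} = \#\Sel{\varphi_3}{E_n}$ and $\#\mathcal{F}^{-6n, \text{ls}}_{n^2} = \#\Sel{\varphi_2}{E_n}$.

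First I would apply \cref{Selmergrp}, which is already stated for all $i \in \{1,2,3\}$ and all $h \in \{\pm 1, \pm 2, \pm 3\}$: for each residue class $h \bmod 8$ the total Selmer size over squarefree $n < X$ equals $\#S(X,h) + o(X)$, the main term being contributed entirely by the identity classes. It then remains only to check that these identity classes are soluble. Under \cref{correspond} the identity of $\Sel{\varphi_3}{E_n}$ is the class of $x^4 + 6nx^2y^2 + n^2y^4$ (the image of $d = 1$), and $z^2 = x^4 + 6nx^2y^2 + n^2y^4$ carries the rational point $(z,x,y) = (1,1,0)$; the same holds for $x^4 - 6nx^2y^2 + n^2y^4$ in the $\varphi_2$ family. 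Conceptually, the identity of any $\varphi$-Selmer group lies in the image of $E'(\Q)/\varphi(E(\Q))$ under the descent map, hence is soluble by the last assertion of \cref{correspond}. Thus the identity classes lie in $\mathcal{F}^{6n, \text{sol}}_{n^2}$ and $\mathcal{F}^{-6n, \text{sol}}_{n^2}$.

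These inputs assemble into exactly the sandwich of \cref{main1refined}. For the family $\mathcal{F}^{6n}_{n^2}$ one obtains, as $X \to \infty$,
\[
\begin{aligned}
\sum_{h=\pm1,\pm2,\pm3} \#S(X,h)
&= \sum_{\substack{0<n<X\\ \text{$n$: sqf.}}} 1
\le \sum_{\substack{0<n<X\\ \text{$n$: sqf.}}} \#\mathcal{F}^{6n, \text{sol}}_{n^2} \\
&\le \sum_{\substack{0<n<X\\ \text{$n$: sqf.}}} \#\mathcal{F}^{6n, \text{ls}}_{n^2}
\le \sum_{h=\pm1,\pm2,\pm3} \#S(X,h) + o(X),
\end{aligned}
\]
where the first equality records that each positive squarefree integer lies in exactly one class $h \bmod 8$, the second inequality uses solubility of the identity class, the third is the inclusion $\mathcal{F}^{6n, \text{sol}}_{n^2} \subseteq \mathcal{F}^{6n, \text{ls}}_{n^2}$, and the last is \cref{Selmergrp} with $i = 3$. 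Dividing through yields the first limit, and the identical computation with $\varphi_2$ in place of $\varphi_3$ yields the second. Since the argument is a direct transcription, I expect no genuine obstacle here; the only non-formal points are the availability of \cref{Selmergrp} for $i = 2, 3$ (granted by its statement) and the solubility of the identity classes (verified above).
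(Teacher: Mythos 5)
Your proposal is correct and follows exactly the route the paper intends: the paper proves \cref{main1variant} by remarking that it goes ``in a similar manner'' to \cref{main1refined}, relying on \cref{Selmergrp} (already stated for $i=2,3$) together with \cref{correspond,integralSummarized} and the solubility of the identity class. Your explicit verification that the identity classes of $\Sel{\varphi_2}{E_n}$ and $\Sel{\varphi_3}{E_n}$ correspond to soluble quartics (via the point $(z,x,y)=(1,1,0)$) fills in the only detail the paper leaves implicit.
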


%----------------------
\section{Proof of \cref{main2}}\label{proofofmain2}
%----------------------
In Subsection \ref{2Coverings}, 
we employ results on $2$-Selmer groups to estimate soluble binary quartic forms.
Later in Subsection \ref{proofmain2},
we prove \cref{main2}.

%----------------------
\subsection{Results on 2-coverings}\label{2Coverings}
%----------------------

We set $n=D_1D_2D_3D_4$ for pairwise coprime integers $D_1, D_2, D_3, D_4 > 0$ and $\mathcal{C} = \mathcal{C}(D_1, D_2, D_3, D_4)$ is the genus one curve defined by
\begin{align*}
\mathcal{C}(D_1, D_2, D_3, D_4) \colon
\begin{cases}
    D_1X^2+D_4W^2=D_2Y^2\\
    D_1X^2-D_4W^2=D_3Z^2.
\end{cases}
\end{align*}

As the following lemma says, this curve represents $2$-coverings of $E$.

\begin{lemma}(cf.\ \cite[X.4.5.1]{SilvermanAEC}, \cite[Lemma 1]{Heath-Brown1993})\label{Sel2ToDiagonals}
Let $n$ be a squarefree integer.
Then, the $2$-Selmer group $\Sel{2}{E_n}$ is bijective to the set
\begin{align*}
    \left\{(D_1, D_2, D_3, D_4) \setmid
    \begin{gathered}
    \text{$D_1, D_2, D_3, D_4\in \Z_{>0}$ are pairwise coprime,}\\
    n=D_1D_2D_3D_4,\\
    \text{$\mathcal{C}(D_1, D_2, D_3, D_4)(\Q_v) \neq \emptyset$ for all places $v$ of $\Q$}
    \end{gathered}
    \right\}.
\end{align*}
\end{lemma}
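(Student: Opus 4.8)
The plan is to run the classical complete $2$-descent on $E_n$, which is available because $E_n$ has full rational $2$-torsion $E_n[2] = \{\infty, (0,0), (n,0), (-n,0)\}$, and then to match the resulting homogeneous spaces with the diagonal curves $\mathcal{C}(D_1,D_2,D_3,D_4)$. Writing $E_n \colon y^2 = x(x-n)(x+n)$ with $e_1 = 0$, $e_2 = n$, $e_3 = -n$, the descent gives an injection $E_n(\Q)/2E_n(\Q) \hookrightarrow (\Q^*/\Q^{*2})^3$ sending $(x,y)$ to the triple of square classes $(x, x-n, x+n)$, which extends to an isomorphism of $H^1(\Q, E_n[2])$ with the group of triples $(b_1,b_2,b_3)$ satisfying $b_1 b_2 b_3 \in \Q^{*2}$. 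Under this identification, $\Sel{2}{E_n}$ is exactly the set of classes whose associated homogeneous space---the intersection of two quadrics in $\PP^3$ described in \cite[X.4.5.1]{SilvermanAEC}---has a $\Q_v$-point for every place $v$. So it suffices to rewrite that homogeneous space in the diagonal shape of $\mathcal{C}(D_1,D_2,D_3,D_4)$ and to show that the bookkeeping of square classes produces precisely the pairwise coprime factorizations $n = D_1 D_2 D_3 D_4$.

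For the combinatorial side I would first note that any class in $\Sel{2}{E_n}$ is unramified outside $\{2,\infty\}\cup\{p : p \mid n\}$, so its squarefree representatives $b_i$ are supported on $-1$, $2$, and the primes dividing $n$. Since $n$ is squarefree, each odd prime $p \mid n$ divides an even number of the $b_i$ (because $b_1 b_2 b_3$ is a square), hence either none of them or exactly two; the four resulting patterns---$p$ divides none, or $p \mid (b_2,b_3)$, $p\mid (b_1,b_3)$, $p\mid (b_1,b_2)$---partition the primes of $n$ into four groups whose products I would call $D_4, D_1, D_2, D_3$ respectively. This yields a pairwise coprime factorization $n = D_1 D_2 D_3 D_4$, and conversely the $D_i$ are recovered from the $b_i$ as greatest common divisors, so the assignment is injective.

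Next I would produce an explicit $\Q$-linear change of variables identifying the two-quadric homogeneous space of $(b_1,b_2,b_3)$ with $\mathcal{C}(D_1,D_2,D_3,D_4)$; concretely one checks that the covering map $\mathcal{C}(D_1,D_2,D_3,D_4) \to E_n$ is given by a formula such as $x = nD_1X^2/(D_4W^2)$, and tracking the square classes of $x$, $x-n$, $x+n$ confirms that $\mathcal{C}(D_1,D_2,D_3,D_4)$ is the homogeneous space attached to the class whose odd parts are distributed as above. Consequently $\mathcal{C}(D_1,D_2,D_3,D_4)(\Q_v)\neq\emptyset$ if and only if the class lies in the local image of the descent map at $v$, which, ranging over all $v$, is exactly the Selmer condition. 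Surjectivity onto $\Sel{2}{E_n}$ then follows once I verify that the local conditions at $2$ and at $\infty$ pin down the signs and the power of $2$ in each $b_i$ uniquely in terms of $(D_1,D_2,D_3,D_4)$---together with the normalization $D_i > 0$---so that every Selmer class is hit exactly once.

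The main obstacle is precisely this last local analysis at the prime $2$ and at the archimedean place: I must show that everywhere-local-solubility forces the sign and $2$-adic parts of the $b_i$ to be the unique ones compatible with a given coprime factorization, so that the forgetful map sending a class to the distribution of its odd primes is a genuine bijection rather than a many-to-one map. This is where the squarefreeness of $n$ and the specific shape of $\mathcal{C}$ (the $\pm D_4 W^2$ terms) enter, and it is the step that requires a careful case-by-case computation of $\Q_2$- and $\R$-solubility, following \cite[Lemma 1]{Heath-Brown1993}.
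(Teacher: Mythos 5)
The paper offers no proof of this lemma to compare against---it is stated as a direct citation of Silverman and Heath-Brown---so I can only assess your argument on its own terms. Your framework (complete $2$-descent via the square classes of $x$, $x-n$, $x+n$; matching homogeneous spaces with the diagonal curves $\mathcal{C}$; sorting the odd primes of $n$ into four groups) is the standard route and is how Heath-Brown proceeds. The problem is the step you defer to the end. You claim that everywhere-local solubility forces the signs and $2$-adic parts of the $b_i$ to be uniquely determined by the coprime factorization, so that the forgetful map from $\Sel{2}{E_n}$ to quadruples is injective. This is not a technical verification you have postponed: it is false, and with it the lemma as literally stated. A cardinality count already shows this. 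For $n=1$ the right-hand set is the single quadruple $(1,1,1,1)$ (and $\mathcal{C}(1,1,1,1)$ has the rational point $(1:0:1:1)$), while $\Sel{2}{E_1}=E_1(\Q)/2E_1(\Q)\cong(\Z/2)^2$ has four elements. For $n=5$ there are only $4^{\omega(5)}=4$ quadruples in total, while $\#\Sel{2}{E_5}\ge\#E_5(\Q)/2E_5(\Q)=8$ because $E_5$ has rank one. For an explicit failure of your injectivity claim, take $P=(-4,6)\in E_5(\Q)$: its descent image is $(x,x-5,x+5)=(-4,-9,1)\equiv(-1,-1,1)$, a nontrivial Selmer class supported only on $-1$, which has the same odd-prime pattern (hence the same quadruple $(1,1,1,5)$) as the identity class $(1,1,1)$. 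No case analysis at $2$ and $\infty$ can separate these two classes.

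What is actually true, and what Heath-Brown's Lemma~1 asserts, is that the set of admissible quadruples has cardinality $\#\Sel{2}{E_n}/4=2^{s(n)}$; the natural correspondence sends a quadruple to the class of $\mathcal{C}(D_1,D_2,D_3,D_4)$ with its all-positive odd representative $(D_2D_3,\,D_1D_2,\,D_1D_3)$, and the missing factor of $4$ is the image of the rational $2$-torsion $E_n(\Q)[2]$, whose four classes $(1,1,1)$, $(-1,-n,n)$, $(n,2,2n)$, $(-n,-2n,2)$ act on quadruples by permuting the positions of the $D_i$. (The statement in the paper drops this factor; its later use in \cref{Heath-Brown}, where the constant $12$ is $4$ times Heath-Brown's average of $3$ for the number of quadruples, implicitly restores it.) So your proof cannot be completed as written. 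You should instead prove the corrected statement---a bijection between admissible quadruples and $\Sel{2}{E_n}$ modulo the torsion image, or equivalently the identity $\#\Sel{2}{E_n}=4\cdot\#\{\text{admissible quadruples}\}$---for which your odd-prime bookkeeping is the right first step, with the local analysis at $2$ and $\infty$ then showing that the projection of the Selmer group onto the subgroup of classes supported on $\{-1,2\}$ has order exactly $4$.
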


We quote the following lemma about the existence of local solutions for $\mathcal{C}$.
Although Heath-Brown treated only the case when $n$ is odd in \cite{Heath-Brown1993},
the following statements are valid when $n$ is even and squarefree.

\begin{lemma}(\cite[pp 175-176]{Heath-Brown1993})\label{Sel2localcondition}
Let $p$ be $\infty$ or a odd prime.
\begin{enumerate}
    \item $\mathcal{C}(\Q_{\infty})\neq \emptyset$ where $\Q_{\infty}=\R$.
    \item If $p\nmid 2n$, then $\mathcal{C}(\Q_p)\neq \emptyset$.
    \item If $p\mid D_1$, $\mathcal{C}(\Q_p)\neq\emptyset \iff \left(\frac{D_4D_2}{p}\right)=\left(\frac{-D_4D_3}{p}\right)=1$.
    \item If $p\mid D_2$, $\mathcal{C}(\Q_p)\neq\emptyset \iff \left(\frac{-D_1D_4}{p}\right)=\left(\frac{2D_1D_3}{p}\right)=1$.
    \item If $p\mid D_3$, $\mathcal{C}(\Q_p)\neq\emptyset \iff \left(\frac{D_1D_4}{p}\right)=\left(\frac{2D_1D_2}{p}\right)=1$.
    \item If $p\mid D_4$, $\mathcal{C}(\Q_p)\neq\emptyset \iff \left(\frac{D_1D_2}{p}\right)=\left(\frac{D_1D_3}{p}\right)=1$.
\end{enumerate}
\end{lemma}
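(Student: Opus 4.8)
The plan is to follow Heath-Brown's analysis of the diagonal model $\mathcal{C}(D_1,D_2,D_3,D_4)$ in \cite{Heath-Brown1993}, checking at each step that the argument at an odd prime $p$ (or at $\infty$) uses only the local structure of the $D_i$ at $p$ together with their pairwise coprimality and the squarefreeness of $n$, and never the global oddness of $n$. This is exactly what makes the statement valid for even squarefree $n$: the only local condition sensitive to the parity of $n$ is the one at $p=2$, which is deliberately excluded here. Part (1) is immediate from positivity: since all $D_i>0$, setting $Z=0$ collapses the system to $D_1X^2=D_4W^2$ together with $2D_1X^2=D_2Y^2$, a conic with positive coefficients that is visibly soluble over $\R$, so $\mathcal{C}(\R)\neq\emptyset$.

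For (2) I would exhibit good reduction at $p\nmid 2n$. In coordinates $(X,Y,Z,W)$ the defining pencil of quadrics has Gram matrices $M_1=\mathrm{diag}(D_1,-D_2,0,D_4)$ and $M_2=\mathrm{diag}(D_1,0,-D_3,-D_4)$, and
\[
\det(\lambda M_1+\mu M_2)=D_1D_2D_3D_4\,\lambda\mu(\lambda-\mu)(\lambda+\mu),
\]
whose four roots $[1:0],[0:1],[1:1],[1:-1]$ in $\Proj{1}$ are distinct whenever $p\nmid 2n$. Hence the reduction is a smooth genus one curve over $\F_p$, which carries an $\F_p$-point by the Hasse--Weil bound (the case $p=3$ being checked by hand), and that point lifts to $\mathcal{C}(\Q_p)$ by Hensel's lemma.

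For (3)--(6) I would treat sufficiency and necessity separately. For sufficiency I set the coordinate paired with the divisible $D_i$ equal to zero (namely $X,Y,Z,W$ when $p\mid D_1,D_2,D_3,D_4$ respectively) and normalise a second coordinate to $1$; the system then collapses to two equations of the shape $(\text{coordinate})^2=(\text{$p$-adic unit})$, and these are solvable in $\Q_p^{\times}$ exactly when the displayed Legendre symbols equal $1$. For necessity I take a primitive integral point, reduce modulo $p$, and run a valuation--parity argument: for $p\mid D_1$, say, if $W\equiv0\pmod p$ then the reduced equations force $Y\equiv Z\equiv0$, hence $X$ is a unit, whence $\ord_p$ of one side of an equation equals $\ord_p D_1=1$ while the other side has even valuation, a contradiction; thus $W$ is a unit and reducing the two equations yields precisely $\left(\frac{D_4D_2}{p}\right)=\left(\frac{-D_4D_3}{p}\right)=1$. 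The remaining three cases are identical after permuting the roles of the coordinates and of the two equations.

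The main obstacle is organisational rather than conceptual. First, the necessity direction must be written uniformly across (3)--(6), keeping track of which coordinate is forced to be a unit and why the surviving parity obstruction appears. Second, one must reconcile the conditions produced by the ``set-a-coordinate-to-zero'' normalisation with the stated symbols: in the cases $p\mid D_2$ and $p\mid D_3$ the leftover equation introduces harmless factors of $2$ and signs (e.g.\ $-2D_4D_3$ versus $2D_1D_3$), which one checks agree modulo squares once the \emph{first} condition is imposed. Finally, the one genuinely new point beyond \cite{Heath-Brown1993} is the observation that each of these odd-$p$ computations sees only the valuations of the $D_j$ at the single prime $p$, so permitting an even $D_j$ (at the prime $2\neq p$) leaves every argument unchanged; this is what upgrades the statement from odd $n$ to arbitrary squarefree $n$.
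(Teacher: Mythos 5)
The paper offers no proof of this lemma at all: it is quoted directly from Heath-Brown (pp.\ 175--176), with only the remark that the odd-prime and archimedean conditions remain valid for even squarefree $n$. Your reconstruction is correct and matches the standard argument behind that citation --- the real point with $Z=0$, good reduction plus Hasse--Weil and Hensel for $p\nmid 2n$, and the set-a-coordinate-to-zero sufficiency together with the primitive-point valuation argument for necessity in cases (3)--(6) all check out (e.g.\ for $p\mid D_2$ one indeed gets $Z^2=2D_1/D_3$ after substituting the first equation, consistent with the stated symbol $\left(\tfrac{2D_1D_3}{p}\right)$), and you correctly identify that only the prime $2$ sees the parity of $n$; the sole quibble is that $(\sqrt{p}-1)^2>0$ already for $p=3$, so no separate check is needed there.
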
 

\begin{lemma}\label{Heath-Brown}
For $h = \pm 1, \pm 2, \pm 3$, we have
\begin{align*}
    \sum_{n\in S(X,h)}\#\Sel{2}{E_{n}} \le 12\#S(X,h) + o(X)
\end{align*}
as $X\to\infty$.
\end{lemma}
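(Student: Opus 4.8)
The plan is to carry out Heath-Brown's direct computation of the mean of the $2$-Selmer group through the diagonal curves $\mathcal{C}(D_1,D_2,D_3,D_4)$, and then to bound the resulting main term by $12$. First I would invoke \cref{Sel2ToDiagonals} to rewrite, for each squarefree $n$, the order $\#\Sel{2}{E_n}$ as the number of pairwise coprime factorizations $n=D_1D_2D_3D_4$ with $D_i>0$ for which $\mathcal{C}(D_1,D_2,D_3,D_4)$ is everywhere locally soluble. Summing over $n\in S(X,h)$ and interchanging the order of summation, I would express
\begin{equation*}
\sum_{n\in S(X,h)}\#\Sel{2}{E_n}=\sum_{(D_1,D_2,D_3,D_4)}\mathbf{1}\bigl[\mathcal{C}(D_1,D_2,D_3,D_4)(\Q_v)\neq\emptyset \text{ for all } v\bigr],
\end{equation*}
where the outer sum runs over ordered $4$-tuples of pairwise coprime squarefree positive integers with $D_1D_2D_3D_4\le X$ and $D_1D_2D_3D_4\equiv h\pmod 8$.

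Next I would make the local conditions explicit via \cref{Sel2localcondition}: solubility at $\R$ and at every $p\nmid 2n$ is automatic, while at an odd prime $p\mid n$ it is a product of two Legendre-symbol conditions determined by which of the $D_i$ is divisible by $p$, and at $p=2$ it is a congruence condition on the $D_i$ modulo $8$. The prime $2$ is the only place where $h$ enters, and the extension of Heath-Brown's computation to even squarefree $n$ must be verified here. Writing each odd-prime factor in the form $\tfrac14(1+(\frac{\ast}{p}))(1+(\frac{\ast}{p}))$ and multiplying out over $p\mid n$, I would split the sum into a \emph{principal} term, in which every Legendre symbol is replaced by its trivial value, and remainder terms carrying genuine Jacobi characters in the variables $D_j$.

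The remainder terms are sums over the $D_j$ twisted by nontrivial characters; by quadratic reciprocity these become character sums to which the oscillation estimates underlying \cite{Heath-Brown1993} and \cite{Xiong-Zaharescu2009} apply, so that their total contribution is $o(X)$ as $X\to\infty$. The principal term is a constant multiple of $\#S(X,h)$, and the remaining task is to bound that constant by $12$. Here the factor $4$ reflects the full rational $2$-torsion of $E_n$, which is everywhere locally soluble for every $n$, while the residual factor $3$ is the bounded average of the complementary part of the Selmer group, in the same spirit as the average in \cref{Selmergrp}. Performing the density bookkeeping — counting the $4^{\omega(n)}$ factorizations against the local density $\tfrac14$ at each odd prime $p\mid n$ and the $2$-adic density — yields a principal term at most $12\,\#S(X,h)$, and combining this with the $o(X)$ error completes the proof. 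I expect the main obstacle to be the uniform cancellation in the character sums, together with determining the correct $2$-adic density in each residue class $h$ so that the bound $12$ holds uniformly; the main-term bookkeeping itself is routine once this oscillation input is in hand.
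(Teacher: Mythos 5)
Your proposal follows essentially the same route as the paper: reduce via \cref{Sel2ToDiagonals} to counting everywhere-locally-soluble quadruples $\mathcal{C}(D_1,D_2,D_3,D_4)$, impose the conditions of \cref{Sel2localcondition} at the odd primes and at $\infty$, and run Heath-Brown's principal-term/oscillating-term analysis to arrive at the constant $12 = 4 \times 3$ (torsion times the average of the complementary part). The one divergence is your treatment of $p=2$ for even $h$: rather than determining a $2$-adic density (which would require local computations at $2$ for even $n$ that are covered neither by \cref{Sel2localcondition} nor by \cite{Heath-Brown1993}), the paper simply discards the condition at $v=2$, which suffices since only an upper bound is claimed.
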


\begin{proof}
For the case $h$ is odd, more strict estimate is given in \cite[Theorem 1]{Heath-Brown1993}.
To consider the case $h$ is even, we briefly sketch the proof; for details, see \cite{Heath-Brown1993}.

By \cref{Sel2ToDiagonals}, it is enough to count the quadruples of pairwise coprime positive integers $(D_1, D_2, D_3, D_4)$ such that $\mathcal{C}(D_1, D_2, D_3, D_4)$ has $\Q_v$-rational points for any place $v$ of $\Q$.

For odd primes $v$ and $v = \infty$, \cref{Sel2localcondition} gives the condition so that the curve $\mathcal{C}(D_1, D_2, D_3, D_4)$ has a $\Q_v$-rational point. 
For $v=2$ case, \cite[Lemma 2]{Heath-Brown1993} states that the existence of $\Q_2$-rational points in $\mathcal{C}(D_1, D_2, D_3, D_4)$ is determined for the existence of $\Q_v$-rational points for other $v$ (including $v = \infty$).
By counting coefficients satisfying the local conditions for any odd prime $v$ and $v = \infty$, we obtain the desired estimation.

For the case $h$ is even, 
again we apply \cref{Sel2ToDiagonals} and reduce to count the quadruples of integers. 
For odd primes $v$ or $v =\infty$, the conditions for the existence of $\Q_v$-rational points are the same as \cref{Sel2localcondition}. 
To obtain an estimation from above, we ignore the condition on $v=2$.
\end{proof}

%----------------------
\subsection{Proof of \cref{main2}}\label{proofmain2}
%----------------------

First, we show the following lemma.
This lemma is essential when we evaluate the number of locally soluble integral quartics.
\begin{lemma}[{\cite[p.\ 47.\ (9)]{Xiong-Zaharescu2009}}]\label{localconditiongeneral}
Define 
\[
    s(n, \widehat{\varphi_i}) \coloneqq
    \dim_{\F_2} \Sel{\widehat{\varphi_i}}{E_{i,n}} - 2.
\]
For each $h = \pm 1, \pm 3$ and $i\in \{1,2,3\}$, 
we obtain
\begin{align*}
\sum_{n\in S(X, h)} s(n, \widehat{\varphi_i}) = \frac{\#S(X, h)\log\log X}{2} + O(X)
\end{align*}
as $X\to \infty$. 
\end{lemma}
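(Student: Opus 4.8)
The plan is to reduce the statement to a combinatorial count governed by quadratic residue conditions, and then to run an average over $n$ that isolates a density-$\tfrac12$ contribution from each prime factor.

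First I would make the Selmer group explicit. By \cref{correspond} and \cref{integralSummarized}, every class in $\Sel{\widehat{\varphi_i}}{E_{i,n}}$ has a unique integral representative in the relevant family $\mathcal{F}^{B}_{M}$ (with $(B,M)=(0,-n^2)$, $(3n,2n^2)$ or $(-3n,2n^2)$), and hence corresponds to a square class $d$ supported on $-1$, $2$ and the primes dividing $n$; note that $n$ is odd since $h=\pm1,\pm3$. Writing the odd part of $d$ as $\prod_{p\in T}p$ with $T\subseteq\{p:p\mid n\}$, everywhere-local-solubility of the associated curve $C_f$ becomes, one prime at a time, a system of $\F_2$-linear conditions on the indicator vector of $T$, of the same shape as in \cref{Sel2localcondition}, whose coefficients are the Legendre symbols $\left(\tfrac{p}{q}\right)$, $\left(\tfrac{-1}{p}\right)$ and $\left(\tfrac{2}{p}\right)$ for $p,q\mid n$. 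Thus, up to a bounded contribution from the signs $-1$ and $2$, $\Sel{\widehat{\varphi_i}}{E_{i,n}}$ is the kernel of an explicit matrix $M_{i,n}$ over $\F_2$ indexed by the prime divisors of $n$, so that $s(n,\widehat{\varphi_i})=\dim_{\F_2}\ker M_{i,n}+O(1)$, the normalisation by $-2$ serving to discard the part present for every $n$.

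Next I would split $M_{i,n}$ into its diagonal part, whose entry at $p$ depends only on a residue condition on $p$ (through $\left(\tfrac{-1}{p}\right)$ and $\left(\tfrac{2}{p}\right)$), and its off-diagonal part assembled from the reciprocity symbols $\left(\tfrac{p}{q}\right)$ with $p\neq q$. The diagonal part makes $s(n,\widehat{\varphi_i})$ behave, to leading order, like the additive function $\sum_{p\mid n}\mathbf{1}[\,p\text{ admissible}\,]$, where a prime is called admissible when it satisfies the relevant local residue condition; such primes have density $\tfrac12$ by Dirichlet's theorem. Summing over $n\in S(X,h)$ and interchanging the order of summation, a Mertens-type estimate yields
\begin{align*}
\sum_{n\in S(X,h)}\ \#\{\,p\mid n : p\text{ admissible}\,\}
&= \#S(X,h)\sum_{\substack{p\leq X\\ p\text{ admissible}}}\frac1p + O(X)\\
&= \frac{\#S(X,h)\log\log X}{2} + O(X),
\end{align*}
the factor $\tfrac12$ being exactly the density of admissible primes.

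The hard part will be showing that the off-diagonal reciprocity terms, together with the discrepancy between $\dim_{\F_2}\ker M_{i,n}$ and its additive approximation, contribute only $O(X)$ and thus do not disturb the main term. This requires cancellation in double sums of the Legendre symbols $\left(\tfrac{p}{q}\right)$ over pairs of primes $p,q\mid n$ with $n\in S(X,h)$, which I would control by character-sum and large-sieve estimates, uniformly over the progression $n\equiv h\pmod 8$. This oscillation argument is the analytic core, and it is precisely what is carried out in \cite{Xiong-Zaharescu2009}; since the three isogenies $i=1,2,3$ yield local conditions of the same shape, the computation applies verbatim in each case, and one may alternatively transport \cite[p.~47, (9)]{Xiong-Zaharescu2009} directly through the dictionary of \cref{correspond}.
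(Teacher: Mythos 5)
The first thing to say is that the paper does not prove this lemma at all: it is imported verbatim from Xiong--Zaharescu \cite[p.~47, (9)]{Xiong-Zaharescu2009}, so there is no internal argument to compare against, and your closing remark---that one may transport the cited formula through the dictionary of \cref{correspond}---is exactly what the paper does. Your sketch of how the cited result is obtained has the right overall shape: by \cref{correspond} and \cref{integralSummarized}, elements of $\Sel{\widehat{\varphi_i}}{E_{i,n}}$ are square classes $d$ supported on $-1$, (for $i=2,3$) the prime $2$, and the primes dividing $n$ (for $i=1$ and odd $n$ the integrality lemma in fact forces $\ord_2 d=0$ since $2\nmid M=-n^2$), cut out by quadratic-residue conditions at the primes $p\mid n$; the main term $\tfrac12\#S(X,h)\log\log X$ does arise from a density-$\tfrac12$ condition per prime divisor via Mertens' theorem; and the error is governed by oscillation of the symbols $\left(\tfrac{p}{q}\right)$.

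Read as a proof, however, the proposal has a genuine gap at its central step. You pass from ``$\Sel{\widehat{\varphi_i}}{E_{i,n}}$ is the kernel of an explicit matrix $M_{i,n}$ over $\F_2$'' to ``$s(n,\widehat{\varphi_i})$ behaves to leading order like the additive function $\sum_{p\mid n}\mathbf{1}[p\ \text{admissible}]$, the off-diagonal part being a correction to be bounded by character sums.'' But the nullity of a matrix is not a sum over its entries, so there is no ``off-diagonal contribution'' that can be isolated and estimated termwise: a priori the off-diagonal block can shift $\dim_{\F_2}\ker M_{i,n}$ by as much as its rank, i.e.\ by up to $\omega(n)$, which is the same order as the main term, so the diagonal approximation has no a priori validity. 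Making the heuristic rigorous requires either an exact closed formula for $\dim_{\F_2}\ker M_{i,n}$ in terms of the Legendre symbols (a R\'edei-type identity---this is precisely what formula (9) of \cite{Xiong-Zaharescu2009} supplies and what the subsequent analysis there averages), or a moment computation for $2^{s(n,\widehat{\varphi_i})}$ rather than for $s(n,\widehat{\varphi_i})$ itself; neither is carried out here. Since, in addition, the cancellation in the double sums over $\left(\tfrac{p}{q}\right)$ is explicitly deferred to the very reference being cited, the proposal is an accurate road map of \cite{Xiong-Zaharescu2009} rather than an independent proof; as a citation it agrees with the paper, but as a proof it is incomplete at the step just described.
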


Before we begin to prove \cref{main2}, 
we show the explicit statement of \cref{main2}.

As a similar manner to (\ref{main1explicit}),
we can rewrite \cref{main2} explicitly as
\begin{align*}
\lim_{X\to\infty} \frac{\#\{f\in \mathcal{F}^{0,\text{sol}}_{-n^2}\mid \text{$n$: sqf.\ and $H_{\text{BH}}(f)=n^2<X$}\}}{\#\{f\in \mathcal{F}^{0,\text{ls}}_{-n^2}\mid \text{$n$: sqf.\ and $H_{\text{BH}}(f)=n^2<X$}\}} = 0.
\end{align*}
Hence, by changing $\sqrt{X}$ to $X$,
it is sufficient to show the following theorem.
\begin{theorem}\label{phihat1}
\begin{align*}
\lim_{X\to\infty}\frac{\displaystyle\sum_{\substack{0<n<X\\ \text{$n$: sqf.}}}\#\mathcal{F}^{0, \text{sol}}_{-n^2}}{\displaystyle\sum_{\substack{0<n<X\\ \text{$n$: sqf.}}}\#\mathcal{F}^{0, \text{ls}}_{-n^2}}=0
\end{align*}
\end{theorem}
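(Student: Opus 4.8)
The plan is to translate the two sums into Selmer-type invariants of the elliptic curves $E_n$ and $E_{1,n}$, and then to exploit the sharp asymmetry between the \emph{boundedness} of the $2$-Selmer groups (\cref{Heath-Brown}) and the \emph{unbounded average growth} of the $\widehat{\varphi_1}$-Selmer groups (\cref{localconditiongeneral}). By the fourth line of \cref{correspond} together with \cref{integralSummarized} (applied to locally soluble forms, which include all soluble ones), the denominator is $\sum_{n}\#\Sel{\widehat{\varphi_1}}{E_{1,n}}$ and the numerator is $\sum_{n}\#\bigl(E_n(\Q)/\widehat{\varphi_1}(E_{1,n}(\Q))\bigr)$, both sums over squarefree $0<n<X$. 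It therefore suffices to prove that the numerator is $O(X)$ while the denominator grows strictly faster than any fixed power of $X$ times $X$, i.e.\ faster than $X$ itself.

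First I would bound the numerator. Since $\widehat{\varphi_1}\circ\varphi_1=[2]$ on $E_n$, we have $2E_n(\Q)\subseteq\widehat{\varphi_1}(E_{1,n}(\Q))$, so there is a surjection $E_n(\Q)/2E_n(\Q)\twoheadrightarrow E_n(\Q)/\widehat{\varphi_1}(E_{1,n}(\Q))$; composing with the descent injection $E_n(\Q)/2E_n(\Q)\hookrightarrow\Sel{2}{E_n}$ gives $\#\bigl(E_n(\Q)/\widehat{\varphi_1}(E_{1,n}(\Q))\bigr)\le\#\Sel{2}{E_n}$ for every $n$. Partitioning the squarefree integers by their residue $h=\pm1,\pm2,\pm3\bmod 8$ and applying \cref{Heath-Brown} yields
\[
    \sum_{\substack{0<n<X\\ \text{$n$: sqf.}}}\#\mathcal{F}^{0,\text{sol}}_{-n^2}
    \le\sum_{h=\pm1,\pm2,\pm3}\sum_{n\in S(X,h)}\#\Sel{2}{E_n}
    \le 12\sum_{h=\pm1,\pm2,\pm3}\#S(X,h)+o(X)=O(X).
\]

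Next I would bound the denominator from below. Writing $\#\Sel{\widehat{\varphi_1}}{E_{1,n}}=2^{\,s(n,\widehat{\varphi_1})+2}$ and discarding the nonnegative contributions of $h=\pm2$, it is enough to treat $h=\pm1,\pm3$. Since $t\mapsto 2^{t}$ is convex, Jensen's inequality gives, for each such $h$,
\[
    \frac{1}{\#S(X,h)}\sum_{n\in S(X,h)}2^{\,s(n,\widehat{\varphi_1})+2}
    \ge 2^{\,2+\frac{1}{\#S(X,h)}\sum_{n\in S(X,h)}s(n,\widehat{\varphi_1})}.
\]
By \cref{localconditiongeneral} and $\#S(X,h)=\tfrac{1}{\pi^2}X+o(X)$, the exponent equals $\tfrac12\log\log X+O(1)$, so the right-hand side is $\gg(\log X)^{(\log 2)/2}$. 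Multiplying back by $\#S(X,h)\asymp X$ shows the denominator is $\gg X(\log X)^{(\log 2)/2}$, and dividing by the $O(X)$ numerator gives a ratio that is $O\bigl((\log X)^{-(\log 2)/2}\bigr)\to 0$.

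The hard part is this last lower bound: I must convert the statement about the \emph{average} of the dimension $s(n,\widehat{\varphi_1})$ in \cref{localconditiongeneral} into a genuine lower bound for the average of $2^{s(n,\widehat{\varphi_1})}$, for which convexity is exactly the right tool, and I must check that the main term $\#S(X,h)\log\log X$ really dominates the $O(X)$ error there—which it does, since $\log\log X\to\infty$. Conceptually the entire theorem rests on the contrast already visible between \cref{Heath-Brown} and \cref{localconditiongeneral}: solubility is controlled by the uniformly bounded $2$-Selmer groups, whereas local solubility is controlled by the $\widehat{\varphi_1}$-Selmer groups, whose sizes blow up like a power of $\log X$ on average, forcing the proportion of soluble forms to $0$.
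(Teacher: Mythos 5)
Your proposal is correct and follows essentially the same route as the paper: the numerator is bounded by $\sum_n\#\Sel{2}{E_n}=O(X)$ via the surjection $E_n(\Q)/2E_n(\Q)\twoheadrightarrow E_n(\Q)/\widehat{\varphi_1}(E_{1,n}(\Q))$ and \cref{Heath-Brown}, while the denominator is bounded below by converting the first-moment estimate of \cref{localconditiongeneral} into a lower bound for $\sum_n 2^{s(n,\widehat{\varphi_1})+2}$. The only difference is that the paper uses the pointwise inequality $2^{s}\ge 1+s$ to get a denominator $\gg X\log\log X$, whereas your Jensen/convexity argument gives the stronger $\gg X(\log X)^{(\log 2)/2}$; both suffice.
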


\begin{proof}
We estimate the denominator and numerator independently.

First, consider the denominator.
From \cref{correspond,localconditiongeneral},
we obtain
\begin{align*}
    \sum_{\substack{0<n<X\\ \text{$n$: sqf.}}}\#\mathcal{F}^{0, \text{ls}}_{-n^2}
    &\geq \sum_{\substack{0<n<X\\ \text{$n$: sqf.\ and odd}}} \#\Sel{\widehat{\varphi_1}}{E_{1, n}}\\
    &= \sum_{\substack{0<n<X\\ \text{$n$: sqf.\ and odd}}}
    2^{2+s(n, \widehat{\varphi_1})}\\
    &= 4\sum_{\substack{0<n<X\\ \text{$n$: sqf.\ and odd}}}
    (1+1)^{s(n, \widehat{\varphi_1})}\\
    &\geq 4\sum_{\substack{0<n<X\\ \text{$n$: sqf.\ and odd}}} (1 + s(n, \widehat{\varphi_1}))\\
    &=4 \times 4 \times \left(1 + \frac{X\log\log X}{2\pi^2}\right) + o(X)\\
    &= \frac{8X\log\log X}{\pi^2} + o(X)
\end{align*}
as $X\to \infty$.

Next, we consider the numerator.
By \cref{integralSummarized}, the natural map
\begin{align*}
    \mathcal{F}^{0, \text{sol}}_{-n^2}\to 
    W^{0}_{-n^2}(\Q)^{\text{sol}}/{\sim}
\end{align*}
is bijective for each $n$.
By \cref{correspond}, the set $W^{0}_{-n^2}(\Q)^{\text{sol}}/{\sim}$ is bijective to the weak Mordell--Weil group $E_n(\Q) / \widehat{\varphi_1}(E_{1,n}(\Q))$.
Since there are a surjection
\begin{align*}
    E_{n}(\Q)/\widehat{\varphi_1}(E_{1,n}(\Q)) 
    &\twoheadleftarrow E_n(\Q)/2E_n(\Q)
\end{align*}
and an inclusion
\begin{align*}
    E_n(\Q)/2E_n(\Q)
    &\subseteq \Sel{2}{E_n},
\end{align*}
we can bound $\# \mathcal{F}^{0, \text{sol}}_{-n^2}$ for each $n$ from above as
\begin{align*}
    \#\mathcal{F}^{0, \text{sol}}_{-n^2}
    \le \# \Sel{2}{E_n}.
\end{align*}
Summing up among squarefree $n$, we obtain 
\begin{align*}
    \sum_{\substack{0<n<X\\ \text{$n$: sqf.}}}\#\mathcal{F}^{0, \text{sol}}_{-n^2}\leq \sum_{\substack{0<n<X\\ \text{$n$: sqf.}}} \#\Sel{2}{E_n}\leq O(X)
\end{align*}
as $X\to\infty$ by \cref{Heath-Brown}.
Hence, we conclude that 
\begin{align*}
\frac{\displaystyle \sum_{\substack{0<n<X\\ \text{$n$: sqf.}}} \#\mathcal{F}^{0, \text{sol}}_{-n^2}}{\displaystyle \sum_{\substack{0<n<X\\ \text{$n$: sqf.}}} \#\mathcal{F}^{0, \text{ls}}_{-n^2}}\leq \frac{O(X)}{8X\log\log X/ \pi^2 + O(X)} \to 0
\end{align*}
as $X\to\infty$ and complete the proof.
\end{proof}

In a similar manner to the case $i=1$,
we can also show the case $i=2$ and $3$.

\begin{theorem}\label{phihat2and3}
\begin{align*}
\lim_{X\to\infty}\frac{\displaystyle\sum_{\substack{0<n<X\\ \text{$n$: sqf.}}}\#\mathcal{F}^{3n, \text{sol}}_{2n^2}}{\displaystyle\sum_{\substack{0<n<X\\ \text{$n$: sqf.}}}\#\mathcal{F}^{3n, \text{ls}}_{2n^2}}=
\lim_{X\to\infty}\frac{\displaystyle\sum_{\substack{0<n<X\\ \text{$n$: sqf.}}}\#\mathcal{F}^{-3n, \text{sol}}_{2n^2}}{\displaystyle\sum_{\substack{0<n<X\\ \text{$n$: sqf.}}}\#\mathcal{F}^{-3n, \text{ls}}_{2n^2}}=0
\end{align*}
\end{theorem}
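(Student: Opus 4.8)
The plan is to run the argument of \cref{phihat1} essentially verbatim, since every auxiliary input it uses — \cref{correspond}, \cref{integralSummarized}, \cref{localconditiongeneral}, and \cref{Heath-Brown} — is already stated uniformly in $i\in\{1,2,3\}$. Concretely, for $i=2$ the family $\mathcal{F}^{3n}_{2n^2}$ corresponds under \cref{correspond} to $H^1(\Q,E_{2,n}[\widehat{\varphi_2}])$, so that \cref{integralSummarized} makes $\mathcal{F}^{3n,\text{ls}}_{2n^2}$ bijective to $\Sel{\widehat{\varphi_2}}{E_{2,n}}$ and $\mathcal{F}^{3n,\text{sol}}_{2n^2}$ bijective to the weak Mordell--Weil group $E_n(\Q)/\widehat{\varphi_2}(E_{2,n}(\Q))$; the case $i=3$ is identical with $\mathcal{F}^{-3n}_{2n^2}$, $\widehat{\varphi_3}$, and $E_{3,n}$. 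I would therefore estimate numerator and denominator separately, exactly as before.

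For the denominator I would discard all even $n$ and apply \cref{localconditiongeneral} to $\widehat{\varphi_2}$. Writing $\#\Sel{\widehat{\varphi_2}}{E_{2,n}}=2^{2+s(n,\widehat{\varphi_2})}$ and using $(1+1)^{s}\geq 1+s$, the same computation as in \cref{phihat1} yields
$$\sum_{\substack{0<n<X\\ \text{$n$: sqf.}}}\#\mathcal{F}^{3n, \text{ls}}_{2n^2}\;\geq\;4\sum_{\substack{0<n<X\\ \text{$n$: sqf.\ and odd}}}\bigl(1+s(n,\widehat{\varphi_2})\bigr)\;=\;\frac{8X\log\log X}{\pi^2}+o(X)$$
as $X\to\infty$, and likewise with $\widehat{\varphi_3}$ in place of $\widehat{\varphi_2}$. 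The crucial feature is that \cref{localconditiongeneral} produces a factor $\log\log X$ valid for all three isogenies, so the denominator grows faster than any constant multiple of $X$.

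For the numerator I would bound each soluble count by the $2$-Selmer group of $E_n$. Since $\widehat{\varphi_2}$ is the dual of the $2$-isogeny $\varphi_2$, multiplication by two on $E_n$ factors as $[2]_{E_n}=\widehat{\varphi_2}\circ\varphi_2$, whence $2E_n(\Q)\subseteq\widehat{\varphi_2}(E_{2,n}(\Q))$ and there is a surjection $E_n(\Q)/2E_n(\Q)\twoheadrightarrow E_n(\Q)/\widehat{\varphi_2}(E_{2,n}(\Q))$. Combined with the inclusion $E_n(\Q)/2E_n(\Q)\subseteq\Sel{2}{E_n}$ this gives $\#\mathcal{F}^{3n,\text{sol}}_{2n^2}\leq\#\Sel{2}{E_n}$, and summing over squarefree $n$ together with \cref{Heath-Brown} yields $\sum_n\#\mathcal{F}^{3n,\text{sol}}_{2n^2}=O(X)$. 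Because the $2$-Selmer bound is attached to $E_n$ itself and is independent of $i$, the identical estimate holds for $\mathcal{F}^{-3n,\text{sol}}_{2n^2}$. Dividing the $O(X)$ numerator by the $\gg X\log\log X$ denominator then forces both limits to $0$.

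The step requiring the most care — rather than a genuine obstacle — is the isogeny bookkeeping in the numerator: one must confirm that the target of $\widehat{\varphi_i}$ is $E_n$ and not $E_{i,n}$, so that the relevant weak Mordell--Weil quotient is $E_n(\Q)/\widehat{\varphi_i}(E_{i,n}(\Q))$ and the factorization $[2]_{E_n}=\widehat{\varphi_i}\circ\varphi_i$ legitimately supplies the surjection from $E_n(\Q)/2E_n(\Q)$. Once this direction is pinned down, the proof is a direct transcription of the $i=1$ case with no new ingredients.
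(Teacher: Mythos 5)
Your proposal is correct and is exactly the argument the paper intends: the paper's entire proof of this theorem is the sentence ``In a similar manner to the case $i=1$, we can also show the case $i=2$ and $3$,'' and your transcription — denominator bounded below via \cref{localconditiongeneral} applied to $\widehat{\varphi_i}$ over odd squarefree $n$, numerator bounded above by $\#\Sel{2}{E_n}$ via the surjection $E_n(\Q)/2E_n(\Q)\twoheadrightarrow E_n(\Q)/\widehat{\varphi_i}(E_{i,n}(\Q))$ and \cref{Heath-Brown} — fills in those details correctly, including the direction of $\widehat{\varphi_i}\colon E_{i,n}\to E_n$.
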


%----------------------
\section{Proof of \cref{main3}}\label{proofofmain3}
%----------------------

In the following,
we only consider the case $i=1$.
There exists a natural homomorphism
\begin{align*}
\pi_1 \colon \Sel{2}{E_n}\to\Sel{\widehat{\varphi_1}}{E_{1,n}}. 
\end{align*}
We say a binary quartic form $f\in W^{0}_{-n^2}(\Z)^{\text{ls}}$ is \emph{strictly locally soluble} if the element in Selmer group $\Sel{\widehat{\varphi_1}}{E_{1,n}}$ corresponding to $f$ is in the image of $\pi_1$.
We write the subset of $W^{0}_{-n^2}(\Z)$ of strictly locally soluble binary quartic forms as $W^{0}_{-n^2}(\Z)^{\text{sls}}$.

The set $W^{0}_{-n^2}(\Z)^{\text{sol}}$ is a subset of $W^{0}_{-n^2}(\Z)^{\text{sls}}$.
In fact, this is followed from \cref{correspond} and the following commutative diagram (cf.\ \cite[p.\ 97]{Aoki}):
\begin{center}
\begin{tikzpicture}[auto]
\node (11) at (-0.7, 1.5) {$0$}; 
\node (12) at (1.5, 1.5) {$E_{1,n}(\Q)/\varphi_1(E_n(\Q))$}; 
\node (13) at (5, 1.5) {$E_{n}(\Q)/2E_n(\Q)$};
\node (14) at (8.5, 1.5) {$E_{n}(\Q)/\widehat{\varphi_1}(E_{1,n}(\Q))$};
\node (21) at (-0.7, 0) {$0$};
\node (22) at (1.5, 0) {$\Sel{\varphi_1}{E_n}$}; 
\node (23) at (5, 0) {$\Sel{2}{E_n}$}; 
\node (24) at (8.5, 0) {$\Sel{\widehat{\varphi_1}}{E_{1,n}}$.}; 
\node (15) at (10.7, 1.5) {$0$};

\draw[->] (11) to (12); \draw[->] (12) to (13); \draw[->] (13) to (14);
\draw[->] (21) to (22); \draw[->] (22) to (23); \draw[->] (23) to[edge label={\small $\pi_1$}] (24);
\draw[->] (14) to (15); \draw[->] (12) to (22); \draw[->] (13) to (23);
\draw[->] (14) to (24);
\end{tikzpicture}
\end{center}

In a similar manner to $\mathcal{F}^{0, \text{sol}}_{-n^2}$ or $\mathcal{F}^{0, \text{ls}}_{-n^2}$, 
we define $\mathcal{F}^{0, \text{sls}}_{-n^2}$ as $\mathcal{F}^{0}_{-n^2}\cap W^{0}_{-n^2}(\Z)^{\text{sls}}$.
Now we restate \cref{main3} in a rigorous form.
As in \cref{main1,main2},
it is sufficient to show the following theorem.
\begin{theorem}\label{keytheorem}
We have
\begin{align*}
\liminf_{X\to\infty}\frac{\displaystyle\sum_{\substack{0<n<X\\ \text{$n$: sqf.}}}\#\mathcal{F}^{0, \text{sol}}_{-n^2}}{\displaystyle\sum_{\substack{0<n<X\\ \text{$n$: sqf.}}}\#\mathcal{F}^{0, \text{sls}}_{-n^2}}
\geq \frac{2.559}{6}.
\end{align*}
\end{theorem}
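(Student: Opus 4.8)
The plan is to estimate numerator and denominator separately and divide, exactly as in \cref{main1refined} and \cref{phihat1}. By \cref{correspond} and \cref{integralSummarized} the numerator $\sum_{0<n<X}\#\mathcal{F}^{0,\text{sol}}_{-n^2}$ counts the weak Mordell--Weil groups $E_n(\Q)/\widehat{\varphi_1}(E_{1,n}(\Q))$, and the denominator $\sum_{0<n<X}\#\mathcal{F}^{0,\text{sls}}_{-n^2}$ counts $\image(\pi_1)\subseteq\Sel{\widehat{\varphi_1}}{E_{1,n}}$. The bottom row of the displayed diagram gives $\ker\pi_1=\Sel{\varphi_1}{E_n}$, hence $\#\image(\pi_1)=\#\Sel{2}{E_n}/\#\Sel{\varphi_1}{E_n}\le\#\Sel{2}{E_n}$, while the top row provides the exact sequence I will use to control the numerator.

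For the denominator I would combine the inequality $\#\mathcal{F}^{0,\text{sls}}_{-n^2}\le\#\Sel{2}{E_n}$ with \cref{Heath-Brown}, summed over the six classes $h\in\{\pm1,\pm2,\pm3\}$ which together exhaust the squarefree integers, to obtain $\sum_{0<n<X}\#\mathcal{F}^{0,\text{sls}}_{-n^2}\le 12\cdot\tfrac{6}{\pi^2}X+o(X)$. The sharp constant in the statement comes from replacing the crude average bound $12$ by Heath-Brown's precise distribution of $\dim_{\mathbb{F}_2}\Sel{2}{E_n}$.

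For the numerator the key is that $\Sel{\varphi_1}{E_n}$ is trivial on a density-one set (\cref{Selmergrp}). Since $E_{1,n}(\Q)/\varphi_1(E_n(\Q))\hookrightarrow\Sel{\varphi_1}{E_n}$, the top row of the diagram then degenerates to $\#\big(E_n(\Q)/\widehat{\varphi_1}(E_{1,n}(\Q))\big)=\#\big(E_n(\Q)/2E_n(\Q)\big)=2^{\,\rank E_n+2}$ for almost all $n$, the exponent $2$ encoding the full rational $2$-torsion of $E_n$ — concretely the four forms $\pm(x^4-n^2y^4)$ and $\pm n(x^4-y^4)$ that are soluble for every $n$. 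Here Smith's theorem is decisive: for $n\equiv5,6,7\pmod8$, a set of density $\tfrac12$ among squarefree $n$, the Mordell--Weil rank satisfies $\rank E_n\ge1$, so $\#\mathcal{F}^{0,\text{sol}}_{-n^2}\ge8$, while the other three classes contribute $\ge4$. (This is the Bhargava--Browning mechanism, whereby an infinite-order rational point on $E_n$ manufactures extra soluble $2$-coverings, here read off directly from the exact sequence.) Summing yields $\sum_{0<n<X}\#\mathcal{F}^{0,\text{sol}}_{-n^2}\ge(4+8)\cdot\tfrac{3}{\pi^2}X+o(X)=\tfrac{36}{\pi^2}X+o(X)$.

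Dividing the two estimates already gives $\liminf\ge\tfrac{36}{72}=\tfrac12$, and in particular the stated $\tfrac{2.559}{6}$. The main obstacle is the bookkeeping behind these constants: on the denominator side one wants Heath-Brown's full distributional result rather than the lossy average bound, and on both sides one must confirm that the density-zero loci where $\Sel{\varphi_1}{E_n}\ne0$, where $\rank E_n\ge2$, or where $\Sha(E_n)$ intervenes are negligible, so that the identity $\#\mathcal{F}^{0,\text{sol}}_{-n^2}=2^{\rank E_n+2}$ may be applied uniformly across the density-one set. The one genuinely deep ingredient, which I would invoke as a black box, is Smith's determination of $\rank E_n$ on a density-one family of $n$.
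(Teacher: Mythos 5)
Your overall architecture matches the paper's: bound the denominator by $\sum_n\#\Sel{2}{E_n}\le 12\cdot\frac{6}{\pi^2}X+o(X)$ via \cref{Heath-Brown}, and bound the numerator from below by $\#\bigl(E_n(\Q)/\widehat{\varphi_1}(E_{1,n}(\Q))\bigr)$, which on a density-one set equals $\#\bigl(E_n(\Q)/2E_n(\Q)\bigr)=2^{\rank E_n+2}$ because $\Sel{\varphi_1}{E_n}$ is almost always trivial (\cref{Selmergrp}) and the exact sequence collapses. However, there is a genuine gap in the final quantitative step. You assert that Smith's theorem gives $\rank E_n\ge 1$ for \emph{all} squarefree $n\equiv 5,6,7\pmod 8$ (``a set of density $\tfrac12$''), and you describe the black box as ``Smith's determination of $\rank E_n$ on a density-one family.'' That is not what \cref{goldfeldconj} provides: Smith's unconditional result, combined with Gross--Zagier and Kolyvagin, only guarantees $\rank E_n=1$ for at least $55.9\%$ of squarefree $n$ in those three residue classes. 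The statement that every such $n$ has positive rank is equivalent (via the parity of the $2^\infty$-Selmer rank plus finiteness of $\Sha$) to a case of the Goldfeld conjecture, and is exactly the conditional scenario the paper relegates to its closing remark. Consequently your numerator bound $(4\cdot 3+8\cdot 3)\frac{1}{\pi^2}X=\frac{36}{\pi^2}X$ and the resulting ratio $\tfrac12$ are not justified; the correct unconditional count is $\frac{4(6+0.559\times 3)}{\pi^2}X+o(X)=\frac{30.708}{\pi^2}X+o(X)$, which against the denominator $\frac{72}{\pi^2}X$ yields precisely $\frac{2.559}{6}\approx 0.4265$.

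Two smaller points. First, your explanation that ``the sharp constant in the statement comes from replacing the crude average bound $12$ by Heath-Brown's precise distribution'' mislocates the source of the constant: the paper uses exactly the crude bound $12\#S(X,h)+o(X)$ on the denominator, and the number $2.559$ arises entirely on the numerator side from Smith's $0.559$. Second, the worry about loci ``where $\Sha(E_n)$ intervenes'' is a red herring for the numerator, since one works directly with Mordell--Weil quotients there; the only density-zero set one must discard is where $E_{1,n}(\Q)/\varphi_1(E_n(\Q))$ is nontrivial, which \cref{Selmergrp} handles. With the Smith input corrected to the $55.9\%$ statement, your argument becomes the paper's proof.
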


In the rest of this section, 
we will prove this theorem.
To prove it, we estimate the denominator and numerator independently.
First, consider the denominator.

The number of strictly locally soluble forms are bounded above by the number of elements of $2$-Selmer groups.
This leads us to estimate
\begin{align}\label{denominator}
\sum_{\substack{0<n<X\\ \text{$n$: sqf.}}}\#\mathcal{F}^{0, \text{sls}}_{-n^2}\notag
&\leq \sum_{h\in\{\pm 1, \pm 2, \pm 3\}}\sum_{n\in S(X, h)}\#\Sel{2}{E_n}\\
&\leq 6\times 12\times \frac{1}{\pi^2}X + o(X)
\end{align}
as $X\to\infty$. 
Here, the last inequality follows from \cref{Selmergrp}.

Next, consider the numerator.
When we evaluate the number of soluble forms,
the following proposition plays an important role. 

\begin{proposition}[{\cite[Theorem 1.5]{Smith}, cf.\ \cite[Theorem 5.3]{Li2020}}]\label{goldfeldconj}
Consider the family of elliptic curves $\{E_n \mid \text{$n$: sqf. and $n\equiv -1, -2, -3 \pmod{8}$}\}$. 
In this family, the proportion of the curves with $\rank_{\Q} (E_n) = 1$ is at least $55.9 \%$.
\end{proposition}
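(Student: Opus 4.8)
The plan is to obtain this statement as a specialization of Smith's theorem on the distribution of $2^\infty$-Selmer groups in the quadratic-twist family $\{E_n\}$, combined with the classical root-number computation for the congruent-number curves. The three steps, carried out in order, are: (i) use the congruence conditions $n \equiv -1, -2, -3 \pmod 8$ to fix the global root number and hence the parity regime; (ii) invoke Smith's equidistribution result to control the $2^\infty$-Selmer corank for a large proportion of such $n$; and (iii) convert the resulting Selmer statistics into a lower bound for the proportion of $n$ with $\rank_\Q E_n = 1$.

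First I would compute the root number. The curve $E_n \colon y^2 = x^3 - n^2 x$ is the quadratic twist by $n$ of the conductor-$32$ curve $E_1 \colon y^2 = x^3 - x$, so its global root number $w(E_n)$ is determined by $n \bmod 8$; for squarefree $n > 0$ one has $w(E_n) = -1$ exactly when $n \equiv 5, 6, 7 \pmod 8$, that is, when $n \equiv -1, -2, -3 \pmod 8$. By the parity theorem, the parity of the $2^\infty$-Selmer corank of $E_n$ is then forced to be odd, so the minimal corank compatible with the root number is $1$ (corank $0$ is excluded, as it would force $w(E_n) = +1$). The whole content of the statement is therefore to show that the corank is not merely odd but equal to $1$ for a large proportion of $n$, i.e.\ that twists of rank $\geq 3$ are sufficiently rare.

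The analytic core I would import directly from \cite{Smith}. Smith determines the limiting distribution of $\dim_{\F_2}\Sel{2^k}{E_n}$ as $n$ runs over squarefree integers in a fixed residue class modulo $8$, and in particular gives a quantitative lower bound for the proportion on which the $2^\infty$-Selmer corank attains its minimum. On the locus where the minimal value $1$ is achieved, corank $1$ together with the odd parity from Step (i) forces $\rank_\Q E_n = 1$ and finiteness of $\Sha(E_n)[2^\infty]$. Feeding the congruence classes $n \equiv -1, -2, -3 \pmod 8$ into Smith's bound, in the explicit numerical form recorded in \cite{Li2020}, yields that at least $55.9\%$ of the curves in the family have rank exactly $1$.

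The main obstacle is contained entirely in the input from \cite{Smith}: proving the equidistribution of the higher Selmer groups, which rests on Smith's analysis of the $2^k$-Selmer ``Markov chain'' and the associated bilinear pairings governing the transitions $\Sel{2^k}{E_n} \to \Sel{2^{k+1}}{E_n}$. A secondary difficulty particular to this family is that the congruent-number curves carry full rational $2$-torsion, placing them at a degenerate boundary of the general twist setup, which is reflected in the explicit constant $55.9\%$ falling short of the conjecturally optimal $100\%$. Since the present paper uses this proposition only as a black box, I would not reprove Smith's distribution theorem; the steps genuinely internal here are only the root-number bookkeeping of Step (i) and the elementary passage from $2^\infty$-Selmer corank to Mordell--Weil rank in Step (iii).
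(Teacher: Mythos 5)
Your overall architecture (invoke Smith for the family with $n \equiv -1,-2,-3 \pmod 8$, convert to a rank statement, extract the constant $55.9\%$) matches the paper's intent, and your root-number bookkeeping in Step (i) is correct: $w(E_n) = -1$ precisely for squarefree $n \equiv 5,6,7 \pmod{8}$. But Step (iii) as you state it contains a genuine gap: from $\op{corank}\, \Sel{2^\infty}{E_n} = 1$ together with odd parity you cannot \emph{elementarily} conclude $\rank_{\Q} E_n = 1$. One has $\op{corank}\,\Sel{2^\infty}{E_n} = \rank_{\Q} E_n + d$, where $d$ is the $\Z_2$-corank of the divisible part of $\Sha(E_n)[2^\infty]$; corank $1$ leaves open the possibility $(\rank, d) = (0,1)$, i.e.\ rank $0$ with $\Sha(E_n)[2^\infty]$ containing a copy of $\Q_2/\Z_2$. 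The Cassels--Tate pairing only forces the nondivisible part of $\Sha[2^\infty]$ to have square order and places no constraint on $d$, and the $2$-parity theorem only tells you the corank is odd --- which you already spent to exclude corank $0$. Ruling out $(0,1)$ requires deep input: either a $2$-converse theorem for these CM curves (Selmer corank one implies analytic rank one, \`a la Tian--Yuan--Zhang or Burungale--Tian), or --- as the paper does --- quoting Smith's Theorem 1.5 in the form in which it is actually stated, namely as a statement about \emph{analytic} rank: the analytic rank of $E_n$ equals one for at least $62.9\%$, $41.9\%$ and $62.9\%$ of squarefree $n \equiv -1, -2, -3 \pmod{8}$ respectively, and then invoking Gross--Zagier and Kolyvagin to transfer this to the arithmetic rank. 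Your write-up never mentions Gross--Zagier, Kolyvagin, or any converse theorem, and labels precisely the step where they are indispensable as ``elementary''; as written, that step fails.

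A smaller discrepancy: the paper obtains $55.9\%$ as the average $(62.9 + 41.9 + 62.9)/3$ over the three residue classes, which have equal density since $\#S(X,h) = X/\pi^2 + o(X)$ for each $h$; your proposal treats $55.9\%$ as a single uniform bound fed in from the literature, obscuring that the class $n \equiv -2 \pmod{8}$ only carries $41.9\%$ --- a distinction that matters whenever the argument is run class by class, as this paper does elsewhere.
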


\begin{proof}
By \cite[Theorem 1.5]{Smith},
the analytic rank of $E_n$ is equal to one for at least $62.9\%, 41.9\%$ and $62.9\%$ of squarefree integers $n$ with $n\equiv -1, -2$ and $-3 \pmod{8}$ respectively.
Combining with the results of Gross--Zagier \cite{Gross-Zagier} and Kolyvagin \cite{Kolyvagin}, we deduce that the same is true for the arithmetic rank.
\end{proof}

We need another proposition on the structure of the Mordell--Weil groups of $E_n$.
Let us write the torsion part of $E_n(\Q)$ as $T$, and the torsion part of $E_{1,n}(\Q)$ as $T_1$. 
Since $\widehat{\varphi_1}$ is a morphism of degree $2$, we have $\widehat{\varphi_1}(T_1) \subseteq T$.
Moreover, we can take subgroups $A \subseteq E_n(\Q)$ and $A_1 \subseteq E_{1,n}(\Q)$ such that
\begin{align*}
    E_{1,n}(\Q) \cong T_1 \times A_1, \quad
    E_{n}(\Q) \cong T \times A
\end{align*}
and $\widehat{\varphi_1}(A_1) \subseteq A$. 
Note that $A, A_1$ are free of rank $r$.
By these, we can consider the decomposition
\begin{align*}
    E_n(\Q)/\widehat{\varphi_1}(E_{1,n}(\Q)) &\cong 
    T/\widehat{\varphi_1}(T_1) \times A/\widehat{\varphi_1}(A_1), \\
    E_n(\Q)/2E_n(\Q) &\cong 
    T/2T \times A/2A.
\end{align*}
The following proposition states that
the contribution of the torsion parts does not depend on $n > 1$. 

\begin{proposition}\label{MordellWeil}
When $n > 1$, we have
\begin{align*}
    \dim_{\F_2} A/\widehat{\varphi_1}(A_1) &= \dim_{\F_2} \frac{E_n(\Q)}{\widehat{\varphi_1}(E_{1,n}(\Q))} - 2, \\
    r = \dim_{\F_2} A/2A &= \dim_{\F_2} \frac{E_n(\Q)}{2E_n(\Q)} - 2.
\end{align*}
\end{proposition}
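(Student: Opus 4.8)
The plan is to reduce both identities to a computation of the contribution of the torsion subgroups, since each claimed formula follows formally from the product decompositions of $E_n(\Q)/\widehat{\varphi_1}(E_{1,n}(\Q))$ and $E_n(\Q)/2E_n(\Q)$ displayed just above. Concretely, once we know that $T/\widehat{\varphi_1}(T_1)$ and $T/2T$ are $\F_2$-vector spaces of dimension $2$, and that $A/\widehat{\varphi_1}(A_1)$ and $A/2A$ are genuinely $\F_2$-vector spaces, additivity of $\dim_{\F_2}$ over direct products yields both equalities (using $A \cong \Z^r$, so that $\dim_{\F_2} A/2A = r$). Thus everything comes down to determining $T = E_n(\Q)_{\mathrm{tors}}$, $T_1 = E_{1,n}(\Q)_{\mathrm{tors}}$, and the image $\widehat{\varphi_1}(T_1) \subseteq T$.

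First I would compute $T$. The factorization $x^3 - n^2 x = x(x-n)(x+n)$ shows that all of $E_n[2]$ is rational, so $T \supseteq E_n[2] \cong (\Z/2\Z)^2$. For the reverse inclusion I would invoke the torsion classification for curves $y^2 = x^3 + Dx$ (equivalently the standard fact for congruent-number curves): writing $D = -n^2$, which is fourth-power-free since $n$ is squarefree, and noting that $-D = n^2$ is a perfect square while $D \neq 4$, one gets $E_n(\Q)_{\mathrm{tors}} \cong (\Z/2\Z)^2$. Hence $2T = 0$, so $T/2T \cong (\Z/2\Z)^2$ has dimension $2$, which already gives the second identity once we observe $A/2A \cong (\Z/2\Z)^r$.

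Next I would compute $T_1$ and its image. Reading off the formulas for $\varphi_1$ and its dual, the kernel of $\widehat{\varphi_1}$ is the rational $2$-torsion point $\{\infty, (0,0)\}$ of $E_{1,n}$. Applying the same classification to $E_{1,n}\colon y^2 = x^3 + 4n^2 x$, for odd $n$ the coefficient $4n^2$ is fourth-power-free and differs from $4$ once $n > 1$, while for even $n = 2m$ the change of variables $(x,y) = (4x',8y')$ identifies $E_{1,n}$ with $y^2 = x'^3 + m^2 x'$; in every case with $n > 1$ one has $-D$ not a perfect square and $D \neq 4$, so $T_1 \cong \Z/2\Z$, generated by $(0,0)$. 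Since this generator lies in $\ker\widehat{\varphi_1}$, we get $\widehat{\varphi_1}(T_1) = \{\infty\}$ and therefore $T/\widehat{\varphi_1}(T_1) = T \cong (\Z/2\Z)^2$, of dimension $2$. Finally, $\widehat{\varphi_1}\circ\varphi_1 = [2]$ together with $\widehat{\varphi_1}(T_1)=0$ gives $2A \subseteq \widehat{\varphi_1}(A_1)$, so $A/\widehat{\varphi_1}(A_1)$ is indeed an $\F_2$-vector space; combined with the torsion computation this yields the first identity.

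The main obstacle is the torsion classification for $E_{1,n}$, and this is precisely where the hypothesis $n > 1$ enters: one must rule out the exceptional case $T_1 \cong \Z/4\Z$ (which does occur for $n = 1$, where $E_{1,1}\colon y^2 = x^3 + 4x$ carries a rational $4$-torsion point), and one must normalize the coefficient $4n^2$ to fourth-power-free form when $n$ is even before applying the classification. Once $T_1 \cong \Z/2\Z$ with generator in $\ker\widehat{\varphi_1}$ is secured, the remainder is formal bookkeeping with the given product decompositions.
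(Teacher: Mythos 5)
Your proof is correct, and it shares the paper's overall reduction: both arguments use the product decompositions displayed just before the proposition to reduce the two identities to showing that the torsion contributions $T/2T$ and $T/\widehat{\varphi_1}(T_1)$ each have $\F_2$-dimension $2$. Where you genuinely differ is in how that torsion computation is carried out. The paper never determines $T$ or $T_1$ explicitly: it shows that the chain $E_n(\Q)[2] \to T/2T \to T/\widehat{\varphi_1}(T_1)$ consists of isomorphisms, by proving $\widehat{\varphi_1}(E_{1,n}(\Q)) \cap E_n(\Q)[2] = \{\infty\}$ (using only that $E_{1,n}(\Q)[4] = E_{1,n}(\Q)[2] = \Ker(\widehat{\varphi_1})$ for $n>1$) and then invoking the equality $\#T[2] = \#(T/2T)$ valid for any finite abelian group $T$. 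You instead appeal to the full torsion classification for $y^2 = x^3 + Dx$ to get $T \cong (\Z/2\Z)^2$ and $T_1 \cong \Z/2\Z = \Ker(\widehat{\varphi_1})$, whence $T/2T = T/\widehat{\varphi_1}(T_1) = T$ directly, and you supply the (correct) extra step that $2A \subseteq \widehat{\varphi_1}(A_1)$ so that $A/\widehat{\varphi_1}(A_1)$ really is an $\F_2$-vector space. The two arguments ultimately rest on the same arithmetic input about $E_{1,n}$ — the absence of a rational point of order $4$ when $n>1$ — but your route requires a stronger citation (the complete classification, including the fourth-power-free normalization for even $n$, which you handle correctly with the substitution $(x,y)=(4x',8y')$), while the paper's is softer, needing only $\#E_n(\Q)[2]=4$ and the statement about rational $4$-torsion. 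Your version has the merit of making completely explicit why $n=1$ must be excluded ($E_{1,1}$ carries a rational point of order $4$, so $\widehat{\varphi_1}(T_1)\neq 0$ there); the paper's has the merit of minimizing what needs to be verified.
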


\begin{proof}
The natural inclusion $E_n(\Q)[2] \hookrightarrow E_n(\Q)$ and the natural surjection $E_n(\Q) \twoheadrightarrow E_n(\Q)/2E_n(\Q)$ give a composite map
\begin{align}\label{CompositeMap}
    E_n(\Q)[2] \to
    \frac{E_n(\Q)}{2E_n(\Q)}.
\end{align}
Since $\widehat{\varphi_1} \circ \varphi_1 = [2]$, there is also a natural surjection
\begin{align}\label{NaturalSurj}
    \frac{E_n(\Q)}{2E_n(\Q)} \twoheadrightarrow
    \frac{E_n(\Q)}{\widehat{\varphi_1}(E_{1,n}(\Q))}.
\end{align}
By the above decomposition, the maps \eqref{CompositeMap} and \eqref{NaturalSurj} give two homomorphisms
\begin{align}
    E_n(\Q)[2] &\to T/2T, \label{CompositeMap2} \\
    T/2T &\twoheadrightarrow
    T/\widehat{\varphi_1}(T_1). \label{NaturalSurj2}
\end{align}
Recall that $\#E_n(\Q)[2] = 4$.
Thus, in order to prove the proposition, 
it is sufficient to show that $E_n(\Q)[2]$, $T/2T$ and $T/\widehat{\varphi_1}(T_1)$ are isomorphic by the homomorphisms \eqref{CompositeMap2} and \eqref{NaturalSurj2}.

We show that the composition map of \eqref{CompositeMap2} and \eqref{NaturalSurj2} is injective.
This amounts to show $\widehat{\varphi_1}(E_{1,n}(\Q)) \cap E_n(\Q)[2] = \{\infty\}$.
Since the degree of $\widehat{\varphi_1}$ is two, we have
\begin{align*}
    \widehat{\varphi_1}(E_{1,n}(\Q)) \cap E_n(\Q)[2] \subseteq
    \widehat{\varphi_1}(E_{1,n}(\Q)[4]) \cap E_n(\Q)[2].
\end{align*}
Since $\Ker(\widehat{\varphi_1}) = E_{1,n}(\Q)[2] = E_{1,n}(\Q)[4]$, we have
\begin{align*}
    \widehat{\varphi_1}(E_{1,n}(\Q)[4]) \cap E_n(\Q)[2] 
    &= \widehat{\varphi_1}(E_{1,n}(\Q)[2]) \cap E_n(\Q)[2] \\
    &= \{\infty\} \cap E_n(\Q)[2] = \{\infty\}.
\end{align*}
Hence we obtain $\widehat{\varphi_1}(E_{1,n}(\Q)) \cap E_n(\Q)[2] = \{\infty\}$.

Next, we show that the map \eqref{CompositeMap2} is an isomorphism.
Since the compositon map of \eqref{CompositeMap2} and \eqref{NaturalSurj2} is injective,
the map \eqref{CompositeMap2} is also injective.
The groups $E_n(\Q)[2] = T[2]$ and $T/2T$ have the same order since they are the kernel and the cokernel of an endomorphism $T \overset{2}{\to} T$ of a finite abelian group $T$.
Hence the map \eqref{CompositeMap2} is surjective.

Combining these with the fact that the map \eqref{NaturalSurj2} is surjective,
we obtain the maps \eqref{CompositeMap2} and \eqref{NaturalSurj2} are isomorphisms.
This completes the proof.
\end{proof}

\begin{remark}
Under the bijection in \cref{correspond},
all $2$-torsion points
\[
\infty, (0,0), (n,0), (-n,0) \in E_n(\Q)[2]
\]
correspond to
\[
x^4-n^2y^4, -x^4+n^2y^4, nx^4-ny^4, -nx^4+ny^4\in \mathcal{F}^{0,\text{sol}}_{-n^2}.
\]
For details,
see \cite[Proposition X.4.9]{SilvermanAEC}.
\end{remark}

Using the above propositions,
we can show the following proposition which estimates the numerator.
\begin{proposition}\label{numeratorkai}
We have 
\begin{align*}
\sum_{\substack{0<n<X\\ \text{$n$: sqf.}}}\#\mathcal{F}^{0, \text{sol}}_{-n^2} \geq \frac{4(6 + 0.559\times 3)}{\pi^2}X + o(X)
\end{align*}
as $X\to\infty$.
\end{proposition}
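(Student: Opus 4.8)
The plan is to bound the number of soluble forms $\#\mathcal{F}^{0,\text{sol}}_{-n^2}$ from below by separating the contribution of $2$-torsion points (which are always present) from the extra contribution coming from curves of positive rank, and then to sum over squarefree $n$ using the density of each residue class modulo $8$. By \cref{integralSummarized} and \cref{correspond}, for each squarefree $n$ the set $\mathcal{F}^{0,\text{sol}}_{-n^2}$ is in bijection with the weak Mordell--Weil group $E_n(\Q)/\widehat{\varphi_1}(E_{1,n}(\Q))$, so it suffices to estimate the order of this group. First I would record the lower bound $\#\mathcal{F}^{0,\text{sol}}_{-n^2} \ge 4$ that holds for every squarefree $n > 1$: indeed, by \cref{MordellWeil} the torsion part $T/\widehat{\varphi_1}(T_1)$ has $\F_2$-dimension $2$, so the contribution of the four $2$-torsion points $\infty, (0,0), (n,0), (-n,0)$ (made explicit in the Remark) is genuine and gives $\#(E_n(\Q)/\widehat{\varphi_1}(E_{1,n}(\Q))) \ge 4$.

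Next I would improve this bound when $E_n$ has positive rank. By \cref{MordellWeil}, the free part satisfies $\dim_{\F_2} A/\widehat{\varphi_1}(A_1) = \dim_{\F_2}(E_n(\Q)/\widehat{\varphi_1}(E_{1,n}(\Q))) - 2$, so
\begin{align*}
\#\frac{E_n(\Q)}{\widehat{\varphi_1}(E_{1,n}(\Q))} = 4 \cdot \#\bigl(A/\widehat{\varphi_1}(A_1)\bigr).
\end{align*}
When $\rank_{\Q}(E_n) = r \ge 1$, the free group $A$ has rank $r$ and $A/\widehat{\varphi_1}(A_1)$ is nontrivial, so its order is at least $2$, giving $\#\mathcal{F}^{0,\text{sol}}_{-n^2} \ge 8$ for such $n$. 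Thus I would split the sum over $h \in \{-1,-2,-3\}$ (i.e.\ $n \equiv -1,-2,-3 \bmod 8$, the classes covered by \cref{goldfeldconj}) into a base contribution of $4$ per squarefree $n$ plus an extra $4$ per rank-one curve, while on the remaining classes $h \in \{1,2,3\}$ I would only use the trivial bound $\ge 4$.

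Assembling the estimate, I would write
\begin{align*}
\sum_{\substack{0<n<X\\ \text{$n$: sqf.}}}\#\mathcal{F}^{0, \text{sol}}_{-n^2}
&\ge \sum_{h\in\{\pm1,\pm2,\pm3\}} 4\,\#S(X,h)
 + \sum_{h\in\{-1,-2,-3\}} 4\,\#\{n \in S(X,h) : \rank_\Q(E_n) \ge 1\}.
\end{align*}
Using $\#S(X,h) = \frac{1}{\pi^2}X + o(X)$ for the first sum gives $\frac{24}{\pi^2}X + o(X)$. For the second sum I would invoke \cref{goldfeldconj}: at least $55.9\%$ of the squarefree $n$ in the three classes $h=-1,-2,-3$ have rank exactly one, hence positive rank, contributing at least $4 \times 0.559 \times 3 \times \frac{1}{\pi^2}X + o(X)$. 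Adding these yields the claimed
\begin{align*}
\frac{4(6 + 0.559\times 3)}{\pi^2}X + o(X).
\end{align*}
The main obstacle is justifying the passage from the global density statement of \cref{goldfeldconj} to a valid $o(X)$ error term: I must confirm that the $55.9\%$ density holds uniformly enough that multiplying by $\#S(X,h)$ and summing over the three residue classes introduces only a lower-order error, and that \cref{MordellWeil} applies to every relevant $n>1$ so that the base contribution of $4$ is never overcounted against the torsion structure. Once these bookkeeping points are secured, the rest is the routine density arithmetic sketched above.
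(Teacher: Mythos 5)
Your overall strategy (a guaranteed contribution of $4$ from the $2$-torsion for every squarefree $n>1$, plus an extra factor of $2$ for positive-rank curves in the classes $n\equiv -1,-2,-3\bmod 8$, followed by the density bookkeeping) matches the paper's, and the final arithmetic is the same. But there is a genuine gap at the crucial step: you assert that if $\rank_{\Q}(E_n)=r\ge 1$ then $A/\widehat{\varphi_1}(A_1)$ is nontrivial. This does not follow and is false for individual curves. Since $\widehat{\varphi_1}\circ\varphi_1=[2]$, one has $2A\subseteq\widehat{\varphi_1}(A_1)\subseteq A$, and on the free parts $[A:\widehat{\varphi_1}(A_1)]\cdot[A_1:\varphi_1(A)]=2^{r}$; nothing prevents the first index from being $1$ (all of the descent being absorbed on the $\varphi_1$-side), in which case $A/\widehat{\varphi_1}(A_1)=0$ even though $r\ge 1$. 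Concretely, the generator of the free part may already lie in $\widehat{\varphi_1}(E_{1,n}(\Q))$ modulo torsion (its $x$-coordinate a square up to the torsion images), and this occurs precisely when $E_{1,n}(\Q)/\varphi_1(E_n(\Q))$ is nontrivial, which \cref{Selmergrp} shows is rare but does not exclude. So the pointwise bound $\#\mathcal{F}^{0,\text{sol}}_{-n^2}\ge 8$ for rank-one $n$ is unjustified, and with it the extra $0.559\times 3$ term.

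The paper repairs exactly this point by arguing in aggregate rather than pointwise. From the four-term exact sequence relating $E_{1,n}(\Q)/\varphi_1(E_n(\Q))$, $E_n(\Q)/2E_n(\Q)$ and $E_n(\Q)/\widehat{\varphi_1}(E_{1,n}(\Q))$ one gets, for $n>1$, the identity $\dim_{\F_2}A/\widehat{\varphi_1}(A_1)=r-\dim_{\F_2}\bigl(E_{1,n}(\Q)/\varphi_1(E_n(\Q))\bigr)$; since $E_{1,n}(\Q)/\varphi_1(E_n(\Q))$ embeds into $\Sel{\varphi_1}{E_n}$, whose nontrivial elements contribute only $o(X)$ in total by \cref{Selmergrp}, it follows that $\sum_{n}\dim_{\F_2}A/\widehat{\varphi_1}(A_1)=\sum_{n}r+o(X)$. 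Combined with the linearization $2^{d}\ge 1+d$, this yields the needed $\sum_n 4(1+r)+o(X)$. Your argument requires this input (or some substitute) to close. By contrast, your worry about the uniformity of the $55.9\%$ density is harmless: \cref{goldfeldconj} is a proportion (liminf) statement within each residue class, which is exactly what a lower bound of the form $0.559\,\#S(X,h)+o(X)$ requires.
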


\begin{proof}
First, 
consider the exact sequence (\cite[Remark X.4.7]{SilvermanAEC})
\[
0\to \frac{E_{1,n}(\Q)[\widehat{\varphi_1}]}{\varphi_1(E_n(\Q)[2])} \to \frac{E_{1,n}(\Q)}{\varphi_1(E_n(\Q))} \to \frac{E_n(\Q)}{2E_n(\Q)} \to \frac{E_n(\Q)}{\widehat{\varphi_1}(E_{1,n}(\Q))} \to 0.
\]
For $n > 1$, we have
\[
    \frac{E_{1,n}(\Q)[\widehat{\varphi_1}]}{\varphi_1(E_n(\Q)[2])} = 0.
\]
By the above exact sequence, if $n > 1$, we obtain 
\begin{align*}%\label{MWequation}
    \dim_{\F_2}\frac{E_{1,n}(\Q)}{\varphi_1(E_n(\Q))} + \dim_{\F_2}\frac{E_n(\Q)}{\widehat{\varphi_1}(E_{1,n}(\Q))} = \dim_{\F_2}\frac{E_n(\Q)}{2E_n(\Q)}.
\end{align*} 
\item Recall that $E_{1,n}(\Q)/\varphi_1(E_n(\Q)) \subseteq \Sel{\varphi_1}{E_n}$. As stated in \cref{Selmergrp}, we have 
\[
    \sum_{\substack{0 < n < X \\ \text{$n$: sqf.}}} \# (\Sel{\varphi_1}{E_n} \setminus \{0\}) = o(X).
\]
as $X\to\infty$.
Hence we obtain 
\begin{align*}
    \sum_{\substack{0 < n < X \\ \text{$n$: sqf.}}} \dim_{\F_2}\frac{E_{1,n}(\Q)}{\varphi_1(E_n(\Q))} = o(X)
\end{align*}
and
\begin{align}\label{ObtainedEstimate}
    \sum_{\substack{0 < n < X \\ \text{$n$: sqf.}}}
    \dim_{\F_2} \frac{E_n(\Q)}{\widehat{\varphi_1}(E_{1,n}(\Q))} 
    &= \sum_{\substack{0 < n < X \\ \text{$n$: sqf.}}} \dim_{\F_2}\frac{E_n(\Q)}{2E_n(\Q)} + o(X).
\end{align}
as $X \to \infty$.

Since
\begin{align*}
    \#\frac{E_n(\Q)}{\widehat{\varphi_1}(E_n(\Q))}
    &= \# (E_n(\Q)[2] \times A/\widehat{\varphi_1}(A_1)) \\
    &= \# E_n(\Q)[2] \times 2^{\dim_{\F_2} A/\widehat{\varphi}(A_1)} \\
    &\ge 4 \times (1 + {\dim_{\F_2} A/\widehat{\varphi}(A_1)}),
\end{align*}
and combining with \cref{correspond,integralSummarized}, 
the inequalities
\begin{align*}
    \sum_{\substack{0<n<X\\ \text{$n$: sqf.}}}\#\mathcal{F}^{0, \text{sol}}_{-n^2}
    &\geq \sum_{\substack{0 < n < X \\ \text{$n$: sqf.}}}
    \#\frac{E_n(\Q)}{\widehat{\varphi_1}(E_{1,n}(\Q))}\\
    &\geq \sum_{\substack{0 < n < X \\ \text{$n$: sqf.}}} 4 \times (1 + {\dim_{\F_2} A/\widehat{\varphi_1}(A_1)})
\end{align*}
hold.
Moreover, the inequalities
\begin{align*}
    \sum_{\substack{0 < n < X \\ \text{$n$: sqf.}}} 4 \times (1 + {\dim_{\F_2} A/\widehat{\varphi}(A_1)})
    &\geq \sum_{\substack{0 < n < X \\ \text{$n$: sqf.}}} 4 \times (1 + r) + o(X) \\
    &\geq \Bigg(\sum_{\substack{0<n<X\\ \text{$n$: sqf.}}} + \sum_{\substack{0<n<X\\ \text{$n$: sqf.}\\ n\equiv 5,6,7 \bmod{8}\\ \rank E_{n}(\Q) = 1}} \Bigg)  4 + o(X)\\
    &\geq \frac{4(6 + 0.559\times 3)}{\pi^2}X + o(X)
\end{align*}
also hold as $X\to \infty$.
Here, the first inequality follows from \eqref{ObtainedEstimate} and \cref{MordellWeil} and the second inequality follows from \cref{goldfeldconj}.
Therefore, 
we obtain the inequality 
\[
\sum_{\substack{0<n<X\\ \text{$n$: sqf.}}}\#\mathcal{F}^{0, \text{sol}}_{-n^2} \geq \frac{4(6 + 0.559\times 3)}{\pi^2}X + o(X)
\]
as $X\to\infty$ and we complete the proof.
\end{proof}

Combining with (\ref{denominator}) and \cref{numeratorkai}, 
we obtain 
\begin{align*}
\liminf_{X\to\infty}\frac{\displaystyle\sum_{\substack{0<n<X\\ \text{$n$: sqf.}}}\#\mathcal{F}^{0, \text{sol}}_{-n^2}}{\displaystyle\sum_{\substack{0<n<X\\ \text{$n$: sqf.}}}\#\mathcal{F}^{0, \text{sls}}_{-n^2}}
\geq \frac{4(6+0.559\times 3)/\pi^2}{6\times 12 \times (1/\pi^2)}=\frac{2.559}{6}\sim 0.4265
\end{align*}
and we completed the proof of \cref{keytheorem}.

In a similar manner,
we define strictly locally soluble quartics in $W^{\pm 3n}_{2n^2}(\Z)^{\text{ls}}$ and consider the subsets $W^{\pm 3n}_{2n^2}(\Z)^{\text{sls}} \subseteq W^{\pm 3n}_{2n^2}(\Z)^{\text{ls}}$ of all strictly locally soluble quartics.
We also define $\mathcal{F}^{3n, \text{sls}}_{2n^2} =\mathcal{F}^{3n}_{2n^2}\cap W^{3n}_{2n^2}(\Z)^{\text{sls}}$ and $\mathcal{F}^{-3n, \text{sls}}_{2n^2} =\mathcal{F}^{-3n}_{2n^2}\cap W^{-3n}_{2n^2}(\Z)^{\text{sls}}$. 
Then, 
the following theorem also holds.

\begin{theorem}
We have
\begin{align*}
\liminf_{X\to\infty}\frac{\displaystyle\sum_{\substack{0<n<X\\ \text{$n$: sqf.}}}\#\mathcal{F}^{3n, \text{sol}}_{2n^2}}{\displaystyle\sum_{\substack{0<n<X\\ \text{$n$: sqf.}}}\#\mathcal{F}^{3n, \text{sls}}_{2n^2}} \geq \frac{4(6+0.559\times 3)/\pi^2}{6\times 12 \times (1/\pi^2)}=\frac{2.559}{6}\sim 0.4265
\end{align*}
and
\begin{align*}
\liminf_{X\to\infty}\frac{\displaystyle\sum_{\substack{0<n<X\\ \text{$n$: sqf.}}}\#\mathcal{F}^{-3n, \text{sol}}_{2n^2}}{\displaystyle\sum_{\substack{0<n<X\\ \text{$n$: sqf.}}}\#\mathcal{F}^{-3n, \text{sls}}_{2n^2}} \geq \frac{4(6+0.559\times 3)/\pi^2}{6\times 12 \times (1/\pi^2)}=\frac{2.559}{6}\sim 0.4265
\end{align*}
\end{theorem}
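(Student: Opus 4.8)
The plan is to repeat the argument of \cref{keytheorem} essentially verbatim for $i = 2, 3$, since the structural ingredients are insensitive to which of the three isogenies we use. For each $i$ the dual isogeny $\widehat{\varphi_i}$ yields a natural map $\pi_i \colon \Sel{2}{E_n} \to \Sel{\widehat{\varphi_i}}{E_{i,n}}$ and hence a notion of strict local solubility, via a commutative diagram identical to the one in Section \ref{proofofmain3}. Under \cref{correspond,integralSummarized} the set $\mathcal{F}^{3n, \text{sls}}_{2n^2}$ (resp.\ $\mathcal{F}^{-3n, \text{sls}}_{2n^2}$) is identified with the image of $\pi_2$ (resp.\ $\pi_3$), so that both numerator and denominator become the same Selmer- and Mordell--Weil-theoretic quantities studied before.

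For the denominator I would use $\#\mathcal{F}^{\pm 3n, \text{sls}}_{2n^2} = \#(\image \pi_i) \le \#\Sel{2}{E_n}$ and apply \cref{Heath-Brown}, giving $\sum_{h} \sum_{n \in S(X,h)} \#\Sel{2}{E_n} \le 6 \times 12 \times \pi^{-2} X + o(X)$ exactly as in (\ref{denominator}); this step makes no reference to the particular isogeny and so is unchanged. For the numerator I would follow the proof of \cref{numeratorkai}: the exact sequence of \cite[Remark X.4.7]{SilvermanAEC} for the pair $\varphi_i, \widehat{\varphi_i}$, combined with the average triviality of $\Sel{\varphi_i}{E_n}$ from \cref{Selmergrp} (which already covers $i = 2, 3$), reduces the count to $\sum \dim_{\F_2} E_n(\Q)/2E_n(\Q)$ up to an $o(X)$ error. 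The rank contribution is then supplied by \cref{goldfeldconj}, which concerns $\rank E_n$ for the \emph{same} elliptic curve $E_n$ and is therefore identical across all three cases.

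The one point that genuinely requires checking, and which I expect to be the main obstacle, is the analog of \cref{MordellWeil} for $i = 2, 3$: one must confirm that the rational torsion of $E_{2,n}$ and $E_{3,n}$ has the same shape as that of $E_{1,n}$, namely $E_{i,n}(\Q)[2] \cong \Z/2\Z$ with no rational point of order $4$ when $n > 1$, so that the torsion contributes precisely the factor $4$ (dimension $2$) uniformly. I would verify this directly from the Weierstrass models: for $E_{2,n}\colon y^2 = x(x^2 - 6nx + n^2)$ and $E_{3,n}\colon y^2 = x(x^2 + 6nx + n^2)$ the quadratic factor has discriminant $32n^2$, which is not a rational square, so the only rational $2$-torsion point is $(0,0)$, exactly as for $E_{1,n}$. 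Granting this, the homological computation in the proof of \cref{MordellWeil}---the injectivity of $E_n(\Q)[2] \to T/2T$ using $\Ker \widehat{\varphi_i} = E_{i,n}(\Q)[2] = E_{i,n}(\Q)[4]$ together with $\#E_n(\Q)[2] = 4$---carries over without modification.

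Assembling the denominator and numerator bounds exactly as at the close of Section \ref{proofofmain3} then yields $\liminf_{X\to\infty}(\cdots) \ge (4(6 + 0.559 \times 3)/\pi^2)/(6 \times 12 /\pi^2) = 2.559/6$ in both cases, completing the proof.
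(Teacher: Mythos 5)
Your proposal takes exactly the route the paper intends: the paper proves this theorem simply by declaring the argument identical to that of \cref{keytheorem}, and your outline --- the denominator bound via $\#\Sel{2}{E_n}$ and \cref{Heath-Brown}, the numerator via the four-term exact sequence, \cref{Selmergrp} (which indeed already covers $i=2,3$) and \cref{goldfeldconj}, plus a re-verification of \cref{MordellWeil} --- is precisely what ``the same proof'' unpacks to. You are also right that the torsion of $E_{2,n}$ and $E_{3,n}$ is the one point that genuinely needs checking, and your discriminant computation correctly gives $E_{i,n}(\Q)[2]=\{\infty,(0,0)\}$ for $i=2,3$ and all $n>0$.

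One correction, however: your blanket claim that there is no rational point of order $4$ for $n>1$ fails for $(i,n)=(3,2)$. On $E_{3,2}\colon y^2=x(x^2+12x+4)$ the point $(2,8)$ satisfies $2\cdot(2,8)=(0,0)$, so it has exact order $4$. (In general, a rational point halving $(0,0)$ on $E_{3,n}$ must have $x=\pm n$, which requires $y^2=8n^3$ or $y^2=4n^3$, i.e.\ $2n$ or $n$ a perfect square; for squarefree $n$ this happens only at $n=2$, resp.\ $n=1$. For $i=2$ both right-hand sides are negative, so $E_{2,n}$ never acquires rational $4$-torsion.) Consequently the injectivity step in the analogue of \cref{MordellWeil} breaks for $E_{3,2}$: there $T/\widehat{\varphi_3}(T_1)$ has $\F_2$-dimension $1$ rather than $2$, and the pointwise bound $\#\bigl(E_n(\Q)/\widehat{\varphi_3}(E_{3,n}(\Q))\bigr)\ge 4(1+r)$ fails at $n=2$. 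Since this concerns a single term in a sum of size $\asymp X$, it perturbs the numerator by $O(1)=o(X)$ and the final constant $2.559/6$ is unaffected; but the torsion proposition for $i=3$ should be stated for $n\neq 1,2$, with the exceptional curve absorbed into the error term.
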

The proof is the same as \cref{keytheorem}.

\begin{remark}
Assume that the Goldfeld conjecture holds.
Then, the proportion of the elliptic curves $\{E_n \mid \text{$n$: sqf.\ and $n\equiv -1, -2, -3\pmod{8}$}\}$ which satisfy $\rank E_n(\Q) = 1$ is 1.
Hence, we obtain 
\begin{align*}
    \sum_{\substack{0<n<X\\ \text{$n$: sqf.}}}\#\mathcal{F}^{0, \text{sol}}_{-n^2}\geq \Bigg(\sum_{\substack{0<n<X\\ \text{$n$:  sqf.}}} + \sum_{\substack{0<n<X\\ \text{$n$: sqf.}\\ n\equiv 5,6,7 \bmod{8}\\ \rank E_{n}(\Q) = 1}} \Bigg)  4 \geq \frac{4(6 + 3)}{\pi^2}X + o(X)
\end{align*}
as $X\to \infty$ and 
\begin{align*}
\liminf_{X\to\infty}\frac{\displaystyle\sum_{\substack{0<n<X\\ \text{$n$: sqf.}}}\#\mathcal{F}^{0, \text{sol}}_{-n^2}}{\displaystyle\sum_{\substack{0<n<X\\ \text{$n$: sqf.}}}\#\mathcal{F}^{0, \text{sls}}_{-n^2}}
\geq \frac{4(6+3)/\pi^2}{6\times 12 \times (1/\pi^2)}=\frac{1}{2}.
\end{align*}
We expect the limit exists, and coincides with $1/2$.
\end{remark}

%----------------------
\section*{Acknowledgments}
%----------------------
The authors would like to thank Prof.\ Ken-ichi Bannai for his careful reading and valuable comments on a draft of this paper.
The first author was supported by JSPS KAKENHI Grant Number 21K13773.
The second author was supported by JSPS KAKENHI Grant Number 22J13607.
%----------------------
%References
%----------------------
\begin{bibdiv}
    \begin{biblist}
    \bibselect{proportion}
    \end{biblist}
\end{bibdiv}
\end{document}